\numberwithin{equation}{section}
\numberwithin{figure}{section}
\newcommand{\N}{{\mathbb N}}
\newcommand{\Z}{{\mathbb Z}}
\newcommand{\fol}{{\rm F{\o}l}}
\newtheorem{thm}{Theorem}[section]
\newtheorem{cor}[thm]{Corollary}
\newtheorem{lem}[thm]{Lemma}
\theoremstyle{definition}
\newtheorem{rem}[thm]{Remark}
\newtheorem{exa}[thm]{Example}
\title[Isoperimetric inequalities]{Isoperimetric inequalities, shapes of F{\o}lner sets and groups with Shalom's property ${H_{\mathrm{FD}}}$}
\author{Anna Erschler}
\thanks{The work of the authors is supported by the ERC grant
GroIsRan. The first named author also thanks the support ANR grant MALIN.
}
\author{Tianyi Zheng}
\keywords{F{\o}lner function, isoperimetric profile, F{\o}lner sets, growth function}
\date{\today}
\begin{document}
\maketitle

\begin{abstract}
We prove an isoperimetric inequality for groups. As an application we show that any Grigrochuk group of intermediate growth has at least exponential F{\o}lner function. 
As another application, we obtain lower bounds on F{\o}lner functions in various 
nilpotent-by-cyclic groups. Under a regularity assumption, we obtain a characterization of  F{\o}lner functions of 
these groups. As a further application, we evaluate the asymptotics of the F{\o}lner function of 
$Sym(\mathbb{Z})\rtimes {\mathbb{Z}}$.
We study examples of groups with Shalom's property $H_{\mathrm{FD}}$ among nilpotent-by-cyclic groups. We show that there exist
lacunary hyperbolic groups with property $H_{\mathrm{FD}}$. We find groups with property $H_{\mathrm{FD}}$, which are direct products 
of lacunary hyperbolic groups and have arbitrarily large F{\o}lner functions. 
\end{abstract}

\section{Introduction}

Given a finitely generated group $G$, equipped with a symmetric finite generating
set $S$, we denote by $d_{S}$ the word metric on $G$ with respect
to $S$ and $l_S(g)$ the word length $d_{S}(e,g)$.
For a subset $V\subset G$, the boundary $\partial_S V$ of
$V$ is the set of elements of $V$ at the word distance one from
the compliment of $V$: $v\in V$ such that $d_{S}(V,G\setminus V)=1$.
We denote by $F_{G,S}(\epsilon)$ the minimum of the cardinality of
the sets $V$ such that $\#\partial_S V/\#V\le\epsilon$, where the sign $\#V$ denotes the cardinality of $V$. We put
$\fol_{G,S}(n)=F_{G,S}(1/n)$. $\fol_{G,S}(n)$ is called the F{\o}lner function
of $G$. It is well defined whenever $G$ is a finitely generated
amenable group.\\

In this paper we prove an isoperimetric inequality for groups and study its applications.
Our result provides information 
on the structure of F{\o}lner sets: we prove that they contain subsets that are \textquotedblleft{satisfactory\textquotedblright} in
the sense defined below. 	

Given a set $T$ in a group $G$ and a subset $V\subset G$, let us
say that $v\in V$ is $r$-good, if there exists at least $r$ distinct
elements $u\in T$ such that $vu\in V$. Given a constant $C>0$ 
and a finite subset $T\subset G$, we say that $V$ is a {\it $C$-satisfactory
set with respect to $T$} if each $v\in V$ is $C\#T$-good.
We say $T$ is symmetric if $T=T^{-1}$.

\begin{thm} \label{thm:satisfactorysets}

There exist constants $C_1,C_2\in (0,1)$ such that the following holds.
Let $G$ be a group, $S$ be a finite
generating set of $G$. 
Let $T\subset G$ be a symmetric set such that $l_{S}(t)\le r$
for all $t\in T$. Take a subset $V\subset G$ such that $\#\partial_S V/\#V\le C_1/r$.
Then $V$ contains a subset $V'$ which is $C_2$-satisfactory with respect
to $T$.
In particular, we can take $C_1=1/24$ and $C_2=1/4$.

\end{thm}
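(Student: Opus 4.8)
The plan is to combine a soft isoperimetric count of ``escaping pairs'' with an iterative pruning of $V$, the pruned set being the desired $V'$. \emph{Step 1 --- few escaping pairs.} First I would fix once and for all a geodesic word $s_1\cdots s_k$ for each $t\in T$ (so $k=l_S(t)\le r$) and call a pair $(v,t)\in V\times T$ \emph{escaping} if $vt\notin V$. For an escaping pair the path $v,\ vs_1,\ vs_1s_2,\ \dots,\ vt$ starts in $V$ and ends outside it, so it has a first vertex outside $V$; its predecessor $w=vs_1\cdots s_{l-1}$ then lies in $\partial_S V$. Since the prefix $s_1\cdots s_{l-1}$ is determined by $t$ and the exit index $l$, the triple $(w,l,t)$ recovers $v$; hence escaping pairs inject into $\{(w,l,t):w\in\partial_S V,\ t\in T,\ 1\le l\le l_S(t)\}$, a set of size at most $r\,\#\partial_S V\,\#T$. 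Using $\#\partial_S V\le (C_1/r)\#V$, the number of escaping pairs is $\le C_1\#V\#T$; the identical bound holds with $T$ replaced by $T^{-1}$ since $l_S(t^{-1})=l_S(t)$.

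\emph{Step 2 --- the pruned set and the deletion count.} Next I would define $V'$ by repeatedly deleting, from the current set $W$ (starting with $W=V$), any $v$ with $\#\{t\in T:vt\in W\}<C_2\#T$; this stops after finitely many steps, and every $v\in V'$ then satisfies $\#\{t\in T:vt\in V'\}\ge C_2\#T$, i.e. $V'$ is $C_2$-satisfactory with respect to $T$. It remains to show $V'\neq\emptyset$, and in fact I would show only a small fraction of $V$ is deleted. List the deleted elements $v_1,\dots,v_m$ in deletion order and set $W_i=V\setminus\{v_1,\dots,v_{i-1}\}$, so $\#\{t\in T:v_it\in W_i\}<C_2\#T$. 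Writing $\#\{t:v_it\in W_i\}=\#\{t:v_it\in V\}-\#\{t:v_it\in\{v_1,\dots,v_{i-1}\}\}$, summing over $i\le m$, and using Step 1 to bound $\sum_{i\le m}\#\{t:v_it\in V\}\ge m\#T-C_1\#V\#T$, one obtains that $D:=\#\{(i,j):j<i\le m,\ v_i^{-1}v_j\in T\}$ satisfies $D>(1-C_2)m\#T-C_1\#V\#T$. On the other hand, the involution $(i,j)\mapsto(j,i)$ together with $T=T^{-1}$ identifies $D$ with $\#\{(i,j):i<j\le m,\ v_i^{-1}v_j\in T\}$, so $2D=\#\{(i,j):i\neq j,\ i,j\le m,\ v_i^{-1}v_j\in T\}\le\sum_{i\le m}\#\{t\in T:v_it\in V\}\le m\#T$, i.e. $D\le \frac{1}{2} m\#T$. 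Comparing the two estimates gives $(\frac{1}{2}-C_2)m\#T<C_1\#V\#T$, hence $m<\frac{C_1}{1/2-C_2}\#V=\frac{1}{6}\#V$ for $C_1=1/24$, $C_2=1/4$, so $\#V'=\#V-m>\frac{5}{6}\#V>0$.

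\emph{Where the difficulty lies.} Step 1 is the genuinely isoperimetric input but is routine once one decides to track the exit point of each escaping path on a fixed geodesic. The real work is the deletion count in Step 2: a naive estimate refuses to close, because removing one element can destroy up to $\#T$ of the ``good'' relations of the remaining elements, and any bound phrased only through the forward quantity $\#\{t:vt\in V\}$ is too weak. The remedy is to pass to the order-respecting pair count $D$ and to exploit the symmetry between $T$ and $T^{-1}$ --- this is precisely why the $T^{-1}$ form of Step 1 (and, more essentially, $T=T^{-1}$) enters --- and matching constants in $(\frac{1}{2}-C_2)m\#T<C_1\#V\#T$ is what pins down the explicit values $C_1=1/24$, $C_2=1/4$; any $C_2<\frac{1}{2}$ and $C_1<\frac{1}{2}-C_2$ would do.
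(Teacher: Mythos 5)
Your Step 1 is exactly the paper's Lemma~\ref{lem:generalizedCSC} (the generalized Coulhon--Saloff-Coste bound), proved by the same geodesic-tracking injection; no difference there. Step 2 is a genuinely different route from the paper's: the paper factors the pruning through an abstract undirected-graph edge-removal lemma (Lemma~\ref{comlemma}), counting \emph{removed edges} via an orientation by removal time and comparing with the total edge count $|E|\ge\frac12 p\#V m$; you instead run the deletion directly and double-count the quantity $D$ of ``goodness destroyed by earlier deletions''. The two are close in spirit (both are chronological double-counts of the cascade), but yours avoids stating a separate graph lemma and keeps the constants transparent, which is a small gain in directness.

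The one point you should be uneasy about is the role of $T=T^{-1}$, which you correctly flag as essential to your Step~2 (the identification $D=\#\{(i,j):i<j,\ v_i^{-1}v_j\in T\}$ fails otherwise, and the trivial bound $D\le m\#T$ does not close the argument). The theorem as stated does \emph{not} assume $T$ symmetric, so strictly speaking your proof proves a weaker statement. However, this is not really a defect of your argument relative to the paper's: the paper's own proof passes to an undirected graph whose edges are $\{v,u\}$ with $v^{-1}u\in T$ \emph{or} $u^{-1}v\in T$, so its pruned set is actually satisfactory with respect to $T\cup T^{-1}$, not $T$, unless $T$ is already symmetric. Indeed the stated theorem is false for non-symmetric $T$: take $G=\Z$, $S=\{\pm1\}$, $T=\{1,\dots,r\}$, $V=\{0,\dots,N\}$ with $N$ large; then $\#\partial_S V/\#V$ can be made $\le C_1/r$, yet no nonempty $V'\subset V$ can be $C_2$-satisfactory with respect to $T$ since $\max V'$ has no $t\in T$ with $\max V'+t\in V'$. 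So the correct reading of the theorem is with $T$ symmetric (or with $T\cup T^{-1}$ in the conclusion), which is what all of the paper's applications use, and under that reading your proof is correct. You handled this more explicitly than the paper does; it would be cleaner still to state the $T=T^{-1}$ hypothesis up front rather than only in the ``where the difficulty lies'' discussion.
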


Given radius $r$, finding an ``optimal'' set $T$ such that every subset $V\subset G$ such that $\#\partial_S V/\#V\le C_1/r$ implies  $V$ contains a subset which is $C_2$-satisfactory with respect to $T$ is related to the {\it dual isoperimetric problem} discussed in \cite[Section 6.3]{gromoventropy}.

Combining Theorem \ref{thm:satisfactorysets} with
Lemma \ref{lem:generalizedLemma3}, which will be proven in Section \ref{inequalities}, we have the following:

\begin{cor} \label{thm:inequality}

Let $G$ be a group with a symmetric finite generating set $S$. Suppose $G$ contains as a subgroup $H=\oplus B_{i}$, $i\in I$, where
$I$ is a countable set. Let 
$v(i,n)=\#\{b\in B_i:\ b\neq e, l_S(b)\le n\}$ and 
$$N(n,k)=\#\{i:\ v(i,n)\ge k\}.$$
Then the F{\o}lner function of $G$ satisfies
$$\fol_{G,S}(n)\ge (k+1)^{CN(n,k)}$$
for any $n,k\in\mathbb{N}$ and some absolute constant $C>0$.

\end{cor}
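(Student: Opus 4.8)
The plan is to use Theorem~\ref{thm:satisfactorysets} to find, inside any near-minimal Følner set, a large satisfactory subset with respect to a cleverly chosen test set $T$, and then to estimate the number of translates of that subset one can pack by a counting/entropy argument encoded in Lemma~\ref{lem:generalizedLemma3}. Concretely, fix $n$ and $k$, and let $J=\{i:v(i,n)\ge k\}$, so $\#J=N(n,k)$. For each $i\in J$ pick $k$ distinct nontrivial elements of $B_i$ of $S$-length at most $n$; let $T_i$ be this $k$-element subset of $B_i$ and set $T=\bigcup_{i\in J}T_i$. Every $t\in T$ has $l_S(t)\le n$, so $r=n$ in the theorem. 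Now suppose $V\subset G$ is a Følner set with $\#\partial_S V/\#V\le C_1/n$; by Theorem~\ref{thm:satisfactorysets} there is $V'\subset V$ that is $C_2$-satisfactory with respect to $T$, i.e. every $v\in V'$ is $C_2\#T$-good, where $\#T=k\,N(n,k)$ (the union is disjoint since the $B_i$ are independent summands and the chosen elements are nontrivial).

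The next step is the combinatorial heart, which I expect is exactly the content of Lemma~\ref{lem:generalizedLemma3}: from a $C_2$-satisfactory set $V'$ with respect to the product-structured test set $T=\bigsqcup_{i\in J}T_i$, one extracts a lower bound on $\#V'$ of the shape $(k+1)^{cN(n,k)}$. The mechanism should be a pigeonhole/averaging argument over the ``coordinates'' $i\in J$: satisfactoriness forces, on average, that for a positive proportion of coordinates $i$ and a positive proportion of points $v\in V'$, many of the translates $v t$ with $t\in T_i$ lie in $V'$; since distinct $t\in B_i$ differ in the $i$-th coordinate, these translates are distinct, and one iterates or tensorizes this over the $\sim N(n,k)$ coordinates to build up a tree of $\gtrsim k+1$ branchings at each of a constant fraction of levels, yielding $\#V'\ge (k+1)^{CN(n,k)}$. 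Finally, since this holds for every $V$ with $\#\partial_S V/\#V\le C_1/n$, taking $m=\lceil n/C_1\rceil$ (or adjusting constants) gives $\fol_{G,S}(m)\ge(k+1)^{CN(n,k)}$, and reindexing $n\leftrightarrow m$ (absorbing the constant $C_1$ into $C$, using that $v(i,n)$ and $N(n,k)$ are monotone in $n$) produces the stated inequality $\fol_{G,S}(n)\ge(k+1)^{CN(n,k)}$.

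The main obstacle is making the ``tensorization over coordinates'' rigorous: a single application of satisfactoriness only says each point has $\ge C_2 k N(n,k)$ good translates in $T$ \emph{total}, not that it has many good translates within each block $T_i$, so one cannot naively multiply $(k+1)$ over all $N(n,k)$ coordinates. The fix — which I expect Lemma~\ref{lem:generalizedLemma3} carries out — is a weight-counting argument: consider the map $V'\times T\to G$, $(v,t)\mapsto vt$, restricted to pairs with $vt\in V'$; satisfactoriness gives at least $C_2\,\#T\cdot\#V'$ such pairs, and distributing them among the $\#J$ blocks shows that a definite fraction of blocks $i$ are ``active'' for a definite fraction of points, so that along a suitable ordering of $J$ one gets multiplicative growth $(1+ck)$ at a constant fraction of steps. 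One must be careful that the good translates in different blocks genuinely produce distinct group elements — this is where independence of the direct summands $B_i$ is used — and that the fraction of active blocks times the per-block branching still multiplies up to base $k+1$ rather than a smaller base; adjusting the absolute constant $C$ absorbs the loss, so no sharp constants are needed, only that $C>0$ is independent of $G$, $S$, $n$, $k$, and $I$.
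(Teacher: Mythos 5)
Your overall plan is exactly the paper's: build the test set $T=\bigsqcup_{i\in J}T_i$ from $k$ nontrivial short elements in each $B_i$ with $v(i,n)\ge k$, apply Theorem~\ref{thm:satisfactorysets} to extract a $C_2$-satisfactory subset $V'\subset V$, and feed it into Lemma~\ref{lem:generalizedLemma3}. The one place where you go astray is in where the ``bridge'' between satisfactoriness and the hypothesis of Lemma~\ref{lem:generalizedLemma3} lives, and in how you propose to cross it.

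Lemma~\ref{lem:generalizedLemma3} does not start from ``$V'$ is $C_2$-satisfactory''; its hypothesis is already phrased per-block: for \emph{every} $g\in V'$ the count $M_{k'}(g)$ of indices $i$ with at least $k'$ good translates inside $B_i$ must be at least $m$. So a conversion step is needed \emph{before} the lemma, and it is not the averaging argument you propose. Averaging $V'\times T$ pairs only tells you that a positive fraction of points see a positive fraction of active blocks, which is not the ``for each $g$'' condition the induction in Lemma~\ref{lem:generalizedLemma3} requires (the proof restricts to cosets and fixes a coordinate, and it needs the hypothesis to hold for every element that survives). The paper instead uses a pointwise pigeonhole on each single $v\in V'$: since $v$ has at least $C_2\#T = C_2 k N(n,k)$ good translates and each block $T_i$ contributes at most $k$ of them, setting $k'=\max\{1,C_2k/2\}$ forces $M_{k'}(v)\ge \frac{C_2}{2-C_2}N(n,k)$ for every $v$. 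That uniform bound drops directly into Lemma~\ref{lem:generalizedLemma3}, giving $\#V'\ge (k'+1)^{cN(n,k)}$. Finally, you do not address the base change from $k'+1$ back to $k+1$: since $k'\gtrsim k$ up to an absolute constant, this only costs a constant factor in the exponent, which the absolute constant $C$ absorbs --- the paper notes this but you should make it explicit, as otherwise your iteration ``builds branchings $\ge k+1$'' is not literally obtained. With the pointwise pigeonhole replacing the averaging and the base-change remark added, your argument matches the paper's.
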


Corollary \ref{thm:inequality} follows as well from an inequality of Saloff-Coste and the second author \cite[Proposition 3.1]{saloffcostezheng}; and the case with $k=1$ from the inequalities
proven in Gromov \cite[Section 6.1]{gromoventropy} 
where the fibers $B_i$ in the product were assumed to be cyclic assuring that the lower bound is valid not only for the F{\o}lner function, 
but also the linear algebraic F{\o}lner function, introduced in \cite{gromoventropy}.
Before explaining examples where the estimate from Theorem \ref{thm:satisfactorysets} is better than the one obtained from Corollary \ref{thm:inequality}, as in Corollary \ref{cor:sym} below, we point out some consequences of Corollary \ref{thm:inequality}.



Given two functions $f_{1},f_{2}$ we say that $f_{1}\preceq f_{2}$
if $f_{1}(x)\le Cf_{2}(Kx)$ for some positive constants $C,K$ and
all $x$. If $f_{1}\preceq f_{2}$ and $f_{2}\preceq f_{1}$, we say
that $f_{1},f_{2}$ are asymptotically equivalent : $f_{1}\simeq f_{2}$.
It is clear that the asymptotic class of the F{\o}lner function of $G$
does not depend on the choice of a finite generating set $S$ in $G$.
We recall that \textit{the growth function} $v_{G,S}(n)$ counts the
number of elements of word length $\le n$.  The asymptotic class of $v_{G,S}$ does not depend on the choice
of the generating set. We write $\fol_G$ and $v_G$ 
for the equivalence classes of the corresponding functions.

Recall that the wreath product of $A$ and $B$, which we denote by $A\wr B$, is a semi-direct product of $A$ and $\sum_{A} B$, where $A$ acts on $\sum_{A} B$ by shifts (in many papers, the authors denote it by $B\wr A$).
As a particular application of 
Corollary \ref{thm:inequality}, we have that the F{\o}lner function of the wreath product
$G=A\wr B$, where $B$ is a nontrivial finite group satisfies $\fol_{G}(n)\succeq\exp(v_{A}(n))$.
This statement has been known previously since it follows from the inequality of Coulon and Saloff-Coste \cite{CSC} and isoperimetric inequality for wreath product \cite{erschlerfoelner} applied to the case of finite lamp groups. Obtaining this statement as a consequence of Corollary \ref{thm:inequality} provides a unified setting for these inequalities. To see that Corollary \ref{inequalities}
the Coulon-Saloff-Coste inequality 
$
v_A(n)\preceq \fol_A(n)
$,
for a group $A$, take a finite group $B=\Z/2\Z$, and fix a finite generating set $S$ of $A$.
Observe that F{\o}lner function of $G=A\wr B$ with respect to generating set $\tilde{S}=S\cup \{(\delta_{e_A},e_A)\}$ is clearly $\le 2^{\fol_{A,S}(n)}$.
On the other hand, as stated above, we have $\fol_{G,\tilde{S}}(n)\ge\exp(Cv_{A,S}(n))$. We conclude therefore that $\fol_{A,S}(n)\ge v_{A,S}(C''n)$, for some positive
$C''$ and all $n$.\\

Another application of Corollary \ref{thm:inequality} is the following. See the end of Section \ref{inequalities} for the proof of its statement.
\begin{exa}\label{grigorchuk}
The F{\o}lner function of the Grigorchuk group $G_{\omega}$, where $\omega\in \{0,1,2\}^{\mathbb{N}}$ is not eventually constant,  satisfies 
$$\fol_{G_{\omega},S}(n)\ge 2^{Cn}$$
for some absolute constant $C>0$.
\end{exa}
These groups are introduced by Grigorchuk in \cite{grigorchuk85} as first examples of groups of intermediate growth. In particular, the exponential lower bound on the F{\o}lner functions cannot be obtained by the Coulhon-Saloff-Coste inequality mentioned above. It is not known if 
there exists a finitely generated group with intermediate F{\o}lner function, see \cite[Problem 12]{grigorchuksurvey}. The Gap Conjecture of Grigorchuk on F{\o}lner  functions 
(\cite[Conjecture 12.3(ii)]{grigorchuksurvey}) states that the F{\o}lner function of a finitely generated group is either polynomial or at least exponential.
This conjecture is an isoperimetric counterpart of the more well-known Gap Conjecture on volume growth formulated by Grigorchuk in \cite{grigorchukicm}, which asks if a finitely generated group $G$ 
with volume growth function asymptotically strictly smaller than $e^{\sqrt{n}}$ is of polynomial growth. 
The volume growth lower bound $e^{n^{1/2}}$ for $G_{\omega}$, where $\omega$ is not eventually constant, is due to Grigorchuk  (see \cite{grigorchuk85}, 
where it is proven using so called  ``anti-contraction'' property).
A weaker form of the Gap Conjecture asks if there exists a $\beta>0$ such that $v_G(n)=e^{o(n^\beta)}$ implies that $G$ is of polynomial growth.\\




The lower bound obtained from Theorem \ref{thm:satisfactorysets} can be better than
the estimate that one gets from the product subgroups as in Corollary
\ref{thm:inequality}. One of the illustrations is provided in the
Corollary \ref{cor:sym} below.

\begin{cor} \label{cor:sym} Consider a group $Sym(H)$ of finitary
permutations of the elements of a finitely generated group $H$. Let
$G=Sym(H)\rtimes H$ be the extension of this group by $H$. Then
the F{\o}lner function of $G$ is asymptotically greater or equal
than $v_{H}(n)^{v_{H}(n)}$.

 \end{cor}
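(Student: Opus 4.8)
The plan is to apply Theorem~\ref{thm:satisfactorysets} with a carefully chosen test set $T$ inside $G = Sym(H) \rtimes H$, and then to run a counting argument showing that any $C_2$-satisfactory subset must be exponentially large in $v_H(n)$. First I would fix a generating set of $G$: take $S_H$ a finite generating set of $H$ together with a single transposition $\tau$ in $Sym(H)$ (say $\tau$ swapping $e$ and some $s_0 \in S_H$); these generate $G$. The key observation is that conjugating a transposition by group elements is cheap: for $h \in H$ with $l_{S_H}(h) \le m$, the transposition $\tau_{h,hs_0}$ swapping $h$ and $hs_0$ has word length $O(m)$ in $G$. More usefully, I want to produce, for a given radius $r$, a large set $T \subset Sym(H)$ of permutations, each of word length $\le r$, that are ``spread out'' enough that being $C_2 \#T$-good forces membership constraints on many coordinates. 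The natural choice: let $B \subset H$ be the ball of radius $\approx r/4$, so $\#B \simeq v_H(r)$, and let $T$ be the set of transpositions $\{\,\tau_{b, b'} : b, b' \in B\,\}$ (or a linearly independent family of them), which has size $\simeq v_H(r)^2$ but more importantly ``touches'' $v_H(r)$ coordinates.

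The heart of the argument is the following: think of $Sym(H) \rtimes H$ acting on $H$, and for a configuration $v = (\sigma, h) \in G$, record the restriction $\sigma|_B$ of the permutation part to the ball $B$. If $V \subset G$ is $C_2$-satisfactory with respect to $T$, then every $v \in V$ is $C_2 \#T$-good, meaning $vt \in V$ for at least $C_2\#T$ choices of $t \in T$; multiplying by a transposition $\tau_{b,b'}$ changes $\sigma$ by that transposition. So within $V$, from any point one can reach a definite proportion of its ``$T$-orbit'' worth of points, and iterating (as in the standard Følner-set-contains-a-large-orbit-skeleton arguments, cf.\ the finite lamplighter case alluded to in the paper) one shows $V$ must contain a subset on which the $B$-restrictions of the permutation coordinate range over at least $\exp(c \cdot v_H(r))$ distinct values — because the transpositions on $B$ generate $Sym(B)$, whose size is $(\#B)! \simeq v_H(r)^{v_H(r)}$, and a fixed positive fraction of the ``good'' structure survives. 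Hence $\#V \gtrsim v_H(r)^{v_H(r)}$. Taking $r \simeq n$ and $\epsilon = C_1/r \simeq 1/n$ in Theorem~\ref{thm:satisfactorysets} gives $\fol_{G,S}(n) \ge \#V' \succeq v_H(n)^{v_H(n)}$, as claimed. One should double-check that the displacement coordinate $h$ does not cause trouble — it does not, since we can always first project away the $H$-part or absorb it, as $[G : Sym(H)] = $ handled by the word-metric comparison, the $H$-coordinate of elements in a Følner set of bounded boundary ratio is itself confined.

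The main obstacle I expect is the passage from ``$V$ is $C_2$-satisfactory with respect to the transposition set $T$'' to the genuinely factorial lower bound $v_H(n)^{v_H(n)}$ rather than merely an exponential bound $\exp(c\, v_H(n))$: Corollary~\ref{thm:inequality} with the product subgroup $\oplus_{b \in B} (\Z/2\Z)$ sitting inside $Sym(H)$ only sees $N(n,k) \simeq v_H(n)$ and yields $\exp(c\,v_H(n))$, so the whole point of using Theorem~\ref{thm:satisfactorysets} directly is to exploit that the transpositions on a ball interact — they generate the full symmetric group $Sym(B)$, not just an abelian $2$-group — and this non-commutativity is what upgrades $\exp(v_H(n))$ to $v_H(n)^{v_H(n)} = \exp(v_H(n)\log v_H(n))$. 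Making the ``$C_2$-satisfactory $\Rightarrow$ the $B$-restrictions realize a positive-proportion-sized subset of $Sym(B)$, and a set meeting that condition has cardinality $\ge |Sym(B)|^{c}$'' step precise is the crux: I would handle it by a connectivity/expansion argument on the Cayley graph of $Sym(B)$ with transposition generators (which is connected with moderate diameter), showing that a subset from each of whose points one can move along $C_2 \#T$ of the transposition-edges must, after the cleaning in Theorem~\ref{thm:satisfactorysets}, carry a subgraph on which the induced-subgraph-in-$Sym(B)$ has min-degree a constant fraction of $\binom{\#B}{2}$, hence size comparable to $|Sym(B)|$ up to taking a power — this is the analogue, one dimension up in non-commutativity, of the ``half the lamps can be toggled'' estimate for finite lamplighters, and I expect the bookkeeping of which transpositions act independently to be where the real work lies.
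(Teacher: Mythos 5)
Your overall strategy is the same as the paper's: take $T$ to be the set of transpositions of points in a ball $B_H(e,n)$ (which have word length $O(n)$ in $G$), apply Theorem~\ref{thm:satisfactorysets} to obtain a $C_2$-satisfactory subset $V'$, reduce to $V'\subset Sym(H)$ by slicing along $Sym(H)$-cosets, and then prove that a satisfactory set in $Sym(H)$ with respect to a transposition set $T(Y)$ has cardinality at least $(D\#Y)^{D\#Y}$. You also correctly diagnose that the whole point of using Theorem~\ref{thm:satisfactorysets} directly, rather than Corollary~\ref{thm:inequality} with an abelian $2$-subgroup of $Sym(B)$, is that the transpositions on $B$ generate a genuinely non-abelian group of factorial size.

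However, there is a genuine gap exactly at the step you yourself flag as ``the crux'': you never actually prove that a $C$-satisfactory subset of $Sym(X)$ with respect to $T(Y)$ has cardinality $\ge (D\#Y)^{D\#Y}$. Your proposed route --- an ``expansion/connectivity argument'' on the Cayley graph of $Sym(B)$ with transposition generators, arguing that an induced subgraph of large min-degree has size comparable to $|Sym(B)|^c$ --- is left entirely as a heuristic, and it is not clear it works as stated: large min-degree in a Cayley graph does not by itself bound the size of an induced subgraph from below by a power of the group order. (Indeed, midway through your sketch you only claim to obtain $\exp(c\,v_H(r))$ distinct restrictions and then jump to the factorial bound without justification.) What the paper does instead is a clean recursive counting argument (Lemma~\ref{lem:symmetricgroup}): for any $v$ there is a position $j\in Y$ with at least $m/n$ values $k$ such that $v(j,k)\in V$; partition $V$ into fibres $V_k=\{\sigma\in V: \sigma(v^{-1}(j))=k\}$; observe that each relevant fibre is still satisfactory with parameter $m-n$; iterate roughly $m/(2n)$ times to obtain $\#V\ge (m/(2n))^{m/(2n)}$. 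This inductive ``peel off one position'' argument is the missing ingredient; without it, or a worked-out substitute, your proposal does not establish the lower bound $v_H(n)^{v_H(n)}$.
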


In particular, the F{\o}lner function of $Sym(\Z)\rtimes\mathbb{Z}$ is asymptotically
equivalent to $n^{n}$. More generally, the F{\o}lner function of
$Sym(\Z^{d})\rtimes\Z^{d}$ (or of $Sym(N)\rtimes N$, $N$ of
growth $\simeq n^{d}$) is asymptotically equivalent to $n^{n^{d}}$.

As another application of the isoperimetric inequalities, we construct nilpotent-by-cyclic groups with
prescribed isoperimetry, see Theorem \ref{prescribe} below.
In particular, for any $a\ge 1$, there exists
a nilpotent-by-cyclic group with
F{\o}lner function asymptotically equivalent to $\exp(n^{a})$. 
Alternatively, we can choose the group to be step-$2$ nilpotent-by-abelian. 
In the special case for $a\in [1, 2]$, the group can be chosen to be step-$2$ nilpotent-by-cyclic.

Gromov states in \cite[Section 8.2, Remark (b)]{gromoventropy} that locally-nilpotent-by-cyclic groups give
examples of elementary amenable groups with prescribed F{\o}lner functions,
provided that the prescribed function has sufficiently fast growing derivatives. 
Further examples of groups with prescribed F{\o}lner functions were provided in 
the paper of Brieussel and the second named author \cite{brieusselzheng}.
In contrast to examples in \cite{brieusselzheng}, 
the groups we consider here are nilpotent-by-cyclic, 
and the regularity assumption on the prescribed function is milder. 
In the special case that the prescribed function is comparable to $\exp(n)$ over infinitely
many sufficiently long intervals, the group in Theorem \ref{prescribe} mentioned above can be chosen to have some additional properties, 
namely to have cautious and diffusive random walk along an infinite time subsequence. Recall that for a random walk $(W_n)$ on $G$,
we say it is {\it diffusive} along a time subsequence if there is a constant $C>0$ such that 
\begin{equation}\label{diffusive}
\liminf_n\mathbb{E} (l_S(W_{n}))/\sqrt{n}\le C.
\end{equation}
Following \cite{erschlerozawa}, we say that a $\mu$-random walk on $G$ is \textit{cautious} if for {\it every} $c>0$,
\begin{equation}
\limsup_{n}\mathbb{P}\left(\max_{1\le k\le n}l_S(W_{k})\le c\sqrt{n}\right)>0.\label{eq: cautious}
\end{equation}
It is not known whether the diffusive condition (\ref{diffusive}) and the cautious condition (\ref{eq: cautious}) are equivalent. 

\begin{thm}\label{prescribe}

Let $\mathfrak{c}\in\N$ and $\tau:\N\to\N$ be a non-decreasing function such that 
\[
0\le \tau(n)\le n^{\mathfrak{c}}.
\]
Then there exists a nilpotent-by-cyclic group $G=N\rtimes\Z$ and a constant $C>1$
such that $N$ is nilpotent with class $\le \mathfrak{c}$ and the F{\o}lner function of $G$ satisfies
\[
Cn\exp\left(n+\tau(n)\right)\ge \fol_{G,S}(n)\ge\exp\left(\frac{1}{C}\left(n+\tau(n/C)\right)\right).
\]
Alternatively, a group satisfying the estimate above can be chosen as a direct product $G_1\times G_2$ where each $G_i$, $i\in\{1,2\}$, 
is a nilpotent-by-cyclic group on which simple random walk is cautious.

\end{thm}

The rest of the Introduction is devoted to discussions on some properties of the examples we construct in Theorem \ref{prescribe} and Section \ref{lacunary} in connection to 
the phenomenon of oscillating F{\o}lner functions and drift functions and Shalom's property $H_{\rm{FD}}$.\\

\subsection{Oscillating F{\o}lner functions and oscillating drift functions}\label{remarks}

Theorem \ref{prescribe} allows us to construct groups with prescribed F{\o}lner functions, in particular, the prescribed F{\o}lner function can oscillate. Recall the drift function of a random walk with step distribution $\mu$ is defined as 
$$L_{\mu}(n)=\sum_{x\in G}d_S(e,x)\mu^{(n)}(x).$$
The phenomenon of oscillating F{\o}lner function and drift function has been observed in various classes of groups. We give a brief review here.

Grigorchuk \cite{grigorchuk85} has constructed groups $G_{\omega}$ of intermediate growth and shown that if
$\omega$ contains long subsequence of constant $000\dots$ then $G_{\omega}$ in some scale is close to a solvable group which is 
a subgroup of $H^m$ ($m$ depends on the scale),
where $H$ is commensurable with a wreath product on which one can check that simple random walk has drift $L_{H,\nu}(n)\le C\sqrt{n}$. 
Thus the results of \cite{grigorchuk85} imply
that for any $\alpha(n)$ tending to infinity, there exists a choice of sequence $\omega$ such that the resulting group 
$G_{\omega}$ is of intermediate volume growth and simple random walk $\mu$ on it satisfies
$$L_{\mu}(n_i) \le \alpha(n_i)\sqrt{n_i}$$ along an infinite subsequence $(n_i)$. 
For the same reason we have that along a subsequence $(m_i)$, $\fol_{G_w}(m_i)\le \exp(m_i\alpha(m_i))$, and one can find a F{\o}lner set $V_i$ with 
$\#\partial V_i/\# V_i\le 1/m_i$ within the ball of radius $m_i \alpha(m_i)$. 
On the other hand, if $\omega$ contains long subsequences of $012012012\dots$, then \cite{grigorchuk85} shows that
$G_w$ is close on some scales to a group commensurable with $G_{012}$. The group $G_{012}$ is defined with sequence 
$\omega=(012)^{\infty}$ and is usually referred to as the first Grigorchuk group.
The result of \cite{erschler-criticalexponent} shows that the drift function of simple random walk on $G_{012}$ satisfies 
$L_{G_{012},\mu}(n)\ge n^{0.65}$ for infinitely many $n$.
This implies that by choosing $\omega$ appropriately, the drift of simple random walk on
$G_w$ can oscillate between $n^{1/2}\alpha(n)$ and $n^{0.65}/\alpha(n)$ for any $\alpha(n)\to \infty $ as $n\to\infty$, 
see \cite[Section 3.1]{erschlerICM}.

The construction of piecewise automatic groups in \cite{erschlerpiecewise} (see also \cite{erschlerICM}) shows that in a group of intermediate growth, the drift function 
$L_{\mu}(n)$ can oscillate
between $n^{1/2} \alpha(n)$ and arbitrarily close to linear; and the F{\o}lner function can be bounded from above by $\exp(n\alpha(n))$ along one subsequence, and arbitrarily large along another subsequence.

In all classes mentioned above it is not known how to evaluate asymptotic of $L_{\mu}(n)$ and $\fol_G(n)$.

The evaluation of $L_{\mu}(n)$ in a family of examples including those with oscillating drift functions was done
by Amir and Virag \cite{amirVirag} , who has shown that a function $f$ 
satisfies the condition that there is a constant $\gamma\in [3/4,1)$ such that $a^{3/4}f(x)\le f(ax)\le a^{\gamma}f(x)$ for all $a>0,x>0$,
then $f$ is equivalent to the drift function of simple random walk on some group.
In a paper of Brieussel and the second author \cite{brieusselzheng}, via a different construction, it was shown that for any function satisfying
$a^{1/2}f(x)\le f(ax)\le af(x)$, there is a group such that $f$ is equivalent to the drift function of simple random walk on it. This construction also provides examples of groups with prescribed F{\o}lner function. 

The examples of $G_{\omega}$ in \cite{grigorchuk85}, piecewise automatic groups in \cite{erschlerpiecewise} and examples of \cite{brieusselzheng} can be chosen to satisfy $L_{\mu}(n) \le \sqrt{n} \alpha (n)$ for infinitely many $n$ for any given $\alpha$ tending to infinity, but none of these examples can have $L_{\mu}(n_i)\le C\sqrt{n_i}$ along some infinite subsequence $(n_i)$. Now our examples discussed in Subsection \ref{pres}, Section \ref{lacunary} and those described in \cite[Subsection 3.3]{brieusselzheng2} provide groups with oscillating drift function satisfying diffusive upper bound along a subsequence (\ref{diffusive}). 
More precisely, examples in Subsection \ref{pres} can be chosen to satisfy the following.

\begin{cor}\label{driftosc}
Given any $\beta\in(\frac{1}{2},1)$,
there exists a nilpotent-by-cyclic group $G$ such that for any symmetric probability measure $\mu$ with finite generating support on $G$,
the drift function $L_{\mu}(n)$ satisfies that $L_{\mu}(t_i)\le Ct_i^{1/2}$ along an infinite subsequence $(t_i)$ and $L_{\mu}(n_i)\ge \frac{1}{C}n_i^{\beta}$ along another infinite subsequence $(n_i)$.
\end{cor}
Corollary \ref{driftosc} is proved in Subsection \ref{osc}.
\begin{subsection}{F{\o}lner functions and groups admitting controlled F{\o}lner pairs along a subsequence}

Theorem \ref{prescribe} provides groups with F{\o}lner function asymptotically equivalent to $e^{n+\tau(n)}$ under some assumptions on $\tau$. Among them some groups have the additional property that they admit controlled F{\o}lner pairs along some subsequences.
In subsection \ref{HFD}, we explain that if a group $G$ admits a subsequence of controlled F{\o}lner pairs, then $G$ has Shalom's property $H_{\rm{FD}}$ (see Corollary \ref{pairfd}). Below we recall necessary background and definitions.

F{\o}lner pairs were introduced in Coulhon, Grigoryan and Pittet \cite{CGP} to produce lower bounds on return probability 
(equivalently upper bounds on $\ell^2$-isoperimetric profiles). 
We recall the definition: 
a sequence $(F'_{n},F_{n})$ of pairs of finite subsets of $G$
with $F'_{n}\subset F_{n}$ is called a sequence of {\it F{\o}lner pairs} adapted to an increasing function $\mathcal{V}(n)$ if 
there is a constant $C<\infty$ and such that for all $n$, we have
\begin{description}
\item[(i)] $\#F_{n}\le C\#F'_{n}$,
\item[(ii)] $d(F'_{n},G\setminus F_{n})\ge n$,
\item[(iii)] $\#F_n\le \mathcal{V}(Cn)$.
\end{description}
Note that admitting such a sequence of F{\o}lner pairs implies that $\fol_G(n)\preceq\mathcal{V}(n)$. 

F{\o}lner pairs provide lower bound for return probability $\mu^{(2n)}(e)$ for a symmetric probability measure $\mu$ of finite support on $G$, see \cite{CGP}. 
We mention that the group with prescribed F{\o}lner function in Theorem \ref{prescribe}
admit F{\o}lner pairs adapted to a functions of the same order as its F{\o}lner function.
For example, from Theorem \ref{prescribe}, we have nilpotent-by-cyclic groups with return probability decay equivalent to $e^{-n^{\alpha}}$, for any $\alpha\in [1/3,\infty)$.

The notion of controlled F{\o}lner pairs was introduced by Tessera in \cite{tessera}.
We say $(F'_{n},F_{n})$ 
with $F'_{n}\subset F_{n}$ is a sequence of {\it controlled F{\o}lner pairs} if they satisfy (i) and (ii) as above, and
\begin{description}
\item[(iii')] $F_n\subset B(e,Cn)$.
\end{description}
Note that it follows from definition that controlled F{\o}lner pairs are adapted to the volume growth function of the group. 
We say that a group admits a subsequence of controlled F{\o}lner pairs if there is an infinite subsequence $(n_i)$ and pairs $(F'_{n_i},F_{n_i})$
which satisfy (i), (ii) and (iii').

Recall that as defined by Shalom in \cite{shalom}, a group $G$ have \textit{property
$H_{\mathrm{FD}}$} 
if every orthogonal $G$-representation $\pi$ with non-zero reduced cohomology $\overline{H}^1(G,\pi)$ admits a finite-dimensional sub-representation. 

By \cite{shalom}, having property $H_{\rm{FD}}$ is invariant under quasi-isometries of amenable groups. Thus it seems natural to find sufficient conditions for property $H_{\rm{FD}}$ which is stable under quasi-isometry as well.
The first sufficient criterion for Property $H_{\rm{FD}}$ which uses random walks is given by Ozawa in \cite{ozawa}. The main application of Ozawa's argument is a new proof of Gromov's polynomial growth theorem. However in \cite[Proposition]{ozawa}, a sufficient criterion for $H_{\rm{FD}}$ is formulated for general groups. In \cite[Corollary 2.5]{erschlerozawa}, it is shown that if $G$ admits a symmetric measure $\mu$ with finite support such that for every $c>0$,
\begin{equation}\label{supball}
\limsup_n \mu^n(B(e,c\sqrt{n}))>0,
\end{equation}
then $G$ has property $H_{\rm{FD}}$.
Admitting a subsequence of controlled F{\o}lner pairs implies (\ref{eq: cautious}), see Lemma \ref{pairs}, therefore  it implies the group has Property $H_{\rm{FD}}$ by \cite[Corollary 2.5]{erschlerozawa}.

The conditions in \cite[Proposition]{ozawa} are not straightforward to to verify even in the simplest examples of exponential growth groups, see \cite{erschlerinvariance}.
The condition of admitting controlled F{\o}lner pairs is more geometric in the sense that
it is stable under quasi-isometry (see \cite{tessera}).
It is not known whether the conditions in \cite[Proposition]{ozawa} and of \cite[Corollary 2.5]{erschlerozawa} are quasi-isometric invariant.

On the other hand, there exist groups with Shalom's property $H_{\rm{FD}}$ such that simple random walks don't satisfy (\ref{supball}), see \cite{brieusselzheng2} and Subsection \ref{HFD}. Such examples show that admitting 
a cautious simple random walk is a strengthening of Property $H_{\rm{FD}}$. 
In particular, we show that groups with property $H_{\rm{FD}}$ where simple random walks 
don't satisfy (\ref{supball}) can be found among nilpotent-by-cyclic groups considered in Theorem \ref{prescribe}. Indeed, from the connection of F{\o}lner function to the decay of return probability, a lower bound on the F{\o}lner function implies an upper bound on the quantity $\mu^n(B(e,r))$, see Remark \ref{non-cautious}. Therefore random walks on groups with F{\o}lner functions growing sufficiently fast do not satisfy (\ref{supball}).

In Section \ref{lacunary}, we construct locally-nilpotent-by-cyclic groups with Shalom's
property $H_{\rm{FD}}$ that are lacunary hyperbolic. 
By definition, lacunary hyperbolic groups are hyperbolic on some scales, 
in some sense they are very \textquotedblleft{large\textquotedblright} on these scales. 
We show that they can be constructed to be at the same time quite \textquotedblleft{small\textquotedblright} 
on some other scales. Namely, they can have 
simple random walk which is cautious and diffusive along some infinite
subsequence.  Among direct products of lacunary hyperbolic groups, there are groups with Property $H_{\rm{FD}}$ whose F{\o}lner function can be arbitrarily large.

There are open questions for some other aspects of lacunary hyperbolic groups, regarding how \textquotedblleft{small\textquotedblright}
they can be on the scales that are not hyperbolic.
For example, Olshanskii, Osin and Sapir \cite{olshosinsapir} ask 
whether a lacunary hyperbolic group can have sub-exponential volume growth or non-uniform exponential growth.
On the other hand, some questions regarding how \textquotedblleft{large\textquotedblright} 
an amenable group with Shalom's property $H_\mathrm{FD}$ can be are also open.  
We mention that in particular, it is not known whether a finitely generated
amenable group admitting a simple random walk with non-trivial Poisson boundary can have property $H_{\mathrm{FD}}$. 
\end{subsection}

\subsection*{Acknowledgements} 
We are grateful to the referee for his/her helpful comments which improved the exposition of the paper. We thank Josh Frisch for pointing out the missing symmetry assumption in Theorem 
\ref{thm:satisfactorysets}. 

\section{Proofs of the isoperimetric inequalities}\label{inequalities}


Let $G$ be a finitely generated group, $S$ be a
finite generating set. Recall that $l_S$ denotes the word distance on $(G,S)$. Take a subset $V\subset G$ such that $\#\partial_S V/\#V\le1/r$.

\begin{lem} [Generalized Coulhon Saloff-Coste inequality]
\label{lem:generalizedCSC}
For each $p:0<p<1$, $C_1>0$ such that $C_1<1-p$ there exists $C>0$ such that the
following holds.  Let $V\subset G$ be a finite subset such that $\#\partial_S V/\#V\le C_1/r$.
Suppose also that $T$ is a subset such that $l_S(t)\le r$ for any $t\in T$.
Then for at least $p\#V$ element $v\in V$ there exists at least
$C\#T$ distinct elements $t\in T$ such that $vt\in V$.
More precisely, we can take $C:1-C=C_1/(1-p)$.

\end{lem}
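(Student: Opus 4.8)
The plan is to run a double-counting (averaging) argument of Coulhon--Saloff-Coste type, tracking how geodesic words representing the elements of $T$ carry points of $V$ across the boundary $\partial_S V$. The fact that $T$ is an arbitrary set of elements of length $\le r$, rather than the ball of radius $r$, does not affect the combinatorics at all.

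First I would fix, for each $t\in T$, a geodesic expression $t=s_1^{(t)}\cdots s_{k_t}^{(t)}$ with $k_t=l_S(t)\le r$ and $s_i^{(t)}\in S\cup S^{-1}$, and estimate, for this fixed $t$, the number of $v\in V$ with $vt\notin V$. For such a $v$, walk along $v_0=v,\ v_1=vs_1^{(t)},\ \dots,\ v_{k_t}=vt$ and let $j=j(v,t)$ be the largest index with $v_j\in V$; then $0\le j<k_t$ and $v_{j+1}=v_js_{j+1}^{(t)}\notin V$, so $v_j\in\partial_S V$. Since $v=v_j(s_1^{(t)}\cdots s_j^{(t)})^{-1}$, the map $v\mapsto(v_j,j)$ is injective into $\partial_S V\times\{0,\dots,k_t-1\}$, so there are at most $k_t\,\#\partial_S V\le r\,\#\partial_S V$ such $v$.

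Then I would sum over $t\in T$ and invoke the hypothesis $\#\partial_S V\le(C_1/r)\#V$ to bound the set of bad pairs $\mathrm{Bad}=\{(v,t)\in V\times T:vt\notin V\}$ by $\#\mathrm{Bad}\le\#T\cdot r\cdot\#\partial_S V\le C_1\#T\#V$; equivalently, with $g(v)=\#\{t\in T:vt\in V\}$ the number of good elements at $v$, one has $\sum_{v\in V}(\#T-g(v))\le C_1\#T\#V$. Finally a Markov-type argument closes the proof: set $C=1-C_1/(1-p)\in(0,1)$ (using $0<C_1<1-p$) and let $B=\{v\in V:g(v)<C\#T\}$; since $\#T-g(v)>(1-C)\#T=\frac{C_1}{1-p}\#T$ for $v\in B$, one gets $\frac{C_1}{1-p}\#T\cdot\#B<C_1\#T\#V$ unless $B=\emptyset$, hence $\#B\le(1-p)\#V$, so at least $p\#V$ elements of $V$ have $g(v)\ge C\#T$, which is the claim with $1-C=C_1/(1-p)$.

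The argument is short and there is no genuinely hard step; the only points needing attention are the injectivity of $v\mapsto(v_j,j)$ for a fixed $t$ (one must use the last crossing of $\partial_S V$ along the path, not an arbitrary one, to recover $v$ from $(v_j,j)$, so that boundary elements are not overcounted), and keeping strict versus non-strict inequalities straight in the last step so that the constant comes out exactly as $1-C=C_1/(1-p)$; the degenerate cases $T=\emptyset$ and $B=\emptyset$ are immediate.
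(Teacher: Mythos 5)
Your proof is correct and is essentially the paper's argument: fix a geodesic for each $t\in T$, count the bad pairs $(v,t)$ with $vt\notin V$ by locating a boundary point on the shifted geodesic and using that $v$ is recovered from the boundary point, $t$, and the distance travelled, giving $\#\mathrm{Bad}\le r\,\#\partial_S V\,\#T\le C_1\#T\#V$, and then apply Markov's inequality. One minor remark: your parenthetical claim that one \emph{must} use the last crossing is slightly overstated (the paper uses the first crossing of $\partial_S V$) --- any fixed deterministic rule selecting a point $v_j\in\partial_S V$ on the path works, since $v=v_j(s_1^{(t)}\cdots s_j^{(t)})^{-1}$ recovers $v$ from $(v_j,j)$ regardless of which crossing is chosen.
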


\begin{proof}

Suppose not. Then for at least $1-p$ proportion of the elements $v$
of $V$, there is strictly less than $C\#T$ multiplications by elements in
$T$ for which $vt\in V$. The idea of the proof is analogous to one
of the known arguments (see e.g. \cite[6.43.]{gromovbook}) for the proof of the Coulhon-Saloff-Coste inequality \cite{CSC}.

For each $v\in V$ consider the sets $U_{v}=\{vs,s\in T\}$, and consider
the total number $N$ of point of the union $U_{v}$ intersected with
$G\setminus V$, taking into account the multiplicity, that is
$$N=\sum_{v\in V}\#(U_v\cap (G\setminus V)).$$
This cardinality
$N$ is at most $\#\partial_S V\#Tr$. Indeed, for each element $t\in T$
fix a geodesics $\gamma_{t}$ from $e$ to $T$. If for some $v\in V$
the element $vt\notin V$, then the shifted geodesic $v,v\gamma_{t}$
intersects the boundary $\partial V$ at least once. Let $u\in \partial V$
be the first intersection of $v,v\gamma_{t}$ with the boundary $\partial V$,
and let $d=d(v,u)$. It is clear that $d\le l_{S}(t)$ and by the
assumption of the lemma we have therefore $d\le r$. Observe that
the point $v$ is uniquely defined by $u$, $t$ and $d$. Therefore the upper bound on $N$
follows. 

On the other hand, by the assumption of the lemma for at least $(1-p)\#V$
points $v$ in $V$ there exist at least $(1-C)\#T$ elements in $T$
such that $vt$ is not in $V$. Therefore, $N\ge(1-p)\#V(1-C)\#T$.
We conclude that 
\[
(1-p)\#V(1-C)\#T< N\le\#\partial V\#Tn\le \frac{C_1}{r}\#V\#Tr,
\]
and hence 
\[
(1-p)(1-C)<C_1,
\]
which contradicts the choice of $C_1$.

\end{proof}

The following combinatorial edge removal lemma is along the same line of reasoning as \cite[Lemma 1]{erschlerfoelner}. 
We consider graphs with non-oriented edges.

\begin{lem}\label{comlemma}
Let $\mathcal{V}$ be a non-empty graph with vertex set $V$ and edge set $E$.
A vertex $v$ is $m$-good if it has at least $m$ edges connecting
to distinct neighbors. Suppose that at least $p\#V$ vertices are
$m$-good, where $p>2/3$, then $\mathcal{V}$ contains a non-empty
subgraph such that every vertex is $(cm)$-good for where $c<\min\{(3p-2)/p,1/2\}$. 
\end{lem}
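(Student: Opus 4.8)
The plan is to let the desired subgraph be the \emph{$(cm)$-core} of a suitable induced subgraph of $\mathcal{V}$, obtained by iterating the removal step: as long as the current graph has a vertex with fewer than $cm$ distinct neighbours in it, delete one such vertex. When the procedure stops, every remaining vertex has $\ge cm$ distinct neighbours, i.e.\ is $(cm)$-good; so the whole statement reduces to showing that the procedure cannot delete all vertices, and for this I would double count edges.

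First pass to the induced subgraph $\mathcal{V}'$ on the set $A$ of $m$-good vertices, and put $B=V\setminus A$, so that $\#A\ge p\#V$ and $\#B\le(1-p)\#V<\tfrac12\#A$, the last inequality being exactly where $p>2/3$ enters. Every vertex of $A$ has $\ge m$ neighbours in $\mathcal{V}$, while the number of edges between $A$ and $B$ is at most $\sum_{b\in B}\deg(b)<m\,\#B$ because a non-$m$-good vertex has degree $<m$; hence $\#E(\mathcal{V}')>\tfrac12\,(m\,\#A-m\,\#B)>\tfrac{2p-1}{2p}\,m\,\#A$. On the other hand, if the $(cm)$-core of $\mathcal{V}'$ were empty, then listing its vertices in deletion order and charging each edge to the earlier-deleted of its two endpoints gives $\#E(\mathcal{V}')<cm\,\#A$; comparing the two estimates already yields the conclusion for every $c\le\tfrac{2p-1}{2p}$. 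To obtain the full range $c<\min\{(3p-2)/p,\,1/2\}$ one keeps in addition the \emph{lower} estimate ``at the moment an $m$-good vertex is deleted it still has more than $(1-c)m$ already-deleted neighbours'' and apportions the edge count among the three types $A$--$A$, $A$--$B$, $B$--$B$; balancing these refined estimates against the peeling bound $\#E(\mathcal{V}')<cm\,\#A$ should give the claimed constant, and carrying out this optimisation is the delicate part.

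The step I expect to be the real obstacle is precisely this sharpening of the constant rather than any single inequality. The crude double count loses a factor on the non-$m$-good vertices, so one must exploit simultaneously that they have degree $<m$ and that there are fewer than $\tfrac13\#V<\tfrac12\#A$ of them, weighing how much total degree the peeling can strip off $A$ before exhausting it against the total degree $\ge m\,\#A$ that $A$ initially carries. The cap $1/2$ in the conclusion is the familiar ceiling coming from ``average degree $\ge d$ forces a non-empty subgraph of minimum degree about $d/2$'', and the hypothesis $p>2/3$ is exactly what makes the balance come out with a positive margin. Finally, one routine point: ``$(cm)$-good'' is unchanged if $cm$ is replaced by $\lceil cm\rceil$, so the strict inequalities above are harmless. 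The argument proceeds along the same removal lines as \cite[Lemma~1]{erschlerfoelner}.
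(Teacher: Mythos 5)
Your ``clean'' argument is correct as far as it goes, but it proves the lemma only for $c \le \frac{2p-1}{2p}$, and this is strictly weaker than the claimed threshold $\min\{(3p-2)/p,\,1/2\}$ whenever $p>3/4$. (Check: at $p=3/4$ one has $\frac{2p-1}{2p}=\frac{3p-2}{p}=1/3$; for $p>3/4$ the first quantity is smaller, and for $p\ge 4/5$ one also has $\frac{2p-1}{2p}<1/2$.) You are explicit that the sharpening to the claimed constant is ``the delicate part'' and ``the real obstacle,'' and you do not carry it out; as written this is a genuine gap relative to the statement of the lemma. The sketch you give for closing it is also pointing in a slightly misleading direction: once you have restricted to the induced subgraph on $A$ there are no $A$--$B$ or $B$--$B$ edges left to apportion, so the ``three types'' bookkeeping does not apply to the peeling of $\mathcal{V}'$.

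What the paper actually does is structurally different in two ways, and it is worth seeing why those differences matter. First, it peels the full graph $\mathcal{V}$, not the induced subgraph on the $m$-good set $A$; this keeps alive the fact that an $m$-good vertex that is eventually deleted must, at the moment of its deletion, have already lost at least $(1-c)m$ of its original $\ge m$ distinct neighbours. Second, it orients each removed edge $u\to w$ according to deletion order and observes that for any $m$-good removed $v$, the in-degree is at least $\frac{1-c}{c}$ times the out-degree. It then sets $C_1$ to be the number of edges with at least one non-$m$-good endpoint, $C_2$ the number of removed edges with both endpoints $m$-good, and bounds the total number of removed edges by $C_1+C_2$, deducing $C_2\le \frac{c}{1-c}C_1$ from the in/out-degree inequality and then comparing $\frac{1}{1-c}C_1 \le \frac{1}{1-c}m(1-p)\#V$ against $|E|\ge\frac{1}{2}p\,m\,\#V$; this is where $c<(3p-2)/p$ enters. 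This accounting is the idea your sketch is groping toward but does not execute.

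Two mitigating remarks. The only place the lemma is invoked in the paper is the proof of Theorem 1.1 with $p=5/6$ and $c=1/3$, and since $1/3<\frac{2p-1}{2p}=2/5$, your simpler peeling-on-$A$ argument does suffice for that application. Also, if you rerun the paper's double count carefully (decomposing the in-edges at a good removed vertex into those from good and from bad sources), the inequality one actually obtains is $C_2\le\frac{c}{1-2c}C_1$ rather than $C_2\le\frac{c}{1-c}C_1$, which yields the slightly smaller threshold $c<\frac{3p-2}{2(2p-1)}$; so it is worth being a little sceptical of the exact constant in the lemma statement as well, even though none of this affects the use made of it downstream. But as a proof of the lemma as stated, your proposal is incomplete: you would need to actually carry out the orientation/double-counting step, not just gesture at it.
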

\begin{proof}
Consider all
the vertices in $V$ that are not $(cm)$-good. Remove all of them
and all the edges adjecent to them. After the removal there can be
new vertices that are not $(cm)$-good. Remove again all of them and
their adjacent edges. Repeat the process. We need to show that the
process stops before the graph becomes empty. 

Orient the removed edges $AB$ as $A\to B$ if $A$ is removed earlier
than $B$. Suppose $v$ is a $m$-good vertex, but gets removed after
several steps. Consider edges adjacent to $v$, then at least $(1-c)m$
edges must be removed in order for $v$ to fail to be $(cm)$-good.
This implies that number of oriented edges ending at $v$ is at least
$(1-c)/c$ as the number of edges starting with $v$. 

Let $C_{1}$ be the number edges in $\mathcal{V}$ such that at least
one of its end vertices is not $m$-good. Let $C_{2}$ be the number
of edges removed that originally both of its end vertices are $m$-good.
Since for every $m$-good vertex that gets removed at some stage,
the number of edges ending at $v$ is at least $(1-c)/c$ as the number
of edges starting with $v$, we have 
\[
\mbox{total number of edges removed}\le C_{1}+C_{2}\le C_{1}+\frac{c}{1-c}C_{1}=\frac{1}{1-c}C_{1}.
\]
Note that $C_{1}\le m(1-p)\#V$. On the other hand, the total amount
of edges in the graph $\mathcal{V}$ satisfies
\[
\left|E\right|\ge\frac{1}{2}p\#Vm.
\]
Since $c<(3p-2)/p$, we have that the total number of edges removed
is strictly less than $|E|$, therefore the remaining graph is non-empty. This finishes the proof the lemma.
\end{proof}

Now we return to the proof of Theorem \ref{thm:satisfactorysets}. 
\begin{proof}[Proof of Theorem \ref{thm:satisfactorysets}]
Given any $p:2/3<p<1$, $0<C_1<1-p$, let $V\subset G$ be a finite subset such that $\#\partial_S V/\#V\le C_1/r$.
Then by Lemma \ref{lem:generalizedCSC}, for at least $p\#V$ elements $v$ in $V$ there exists at least $C\#T$ distinct multiplications by $t\in T$ such that $vt\in V$.
Now consider the graph $\mathcal{V}$ for which the vertex set consists of elements in $V$ and $v,u\in V$ are connected by an edge of there is some $t\in T$ such that
$u=vt$. 
Since $T$ is assumed to be symmetric, $v$ is connected to $u$ if and only if $u$ is connected to $v$, in other words, the graph is non-oriented. 
Then we have that at least $p\#V$ vertices are $C\#T$-good.
By Lemma \ref{comlemma}, $\mathcal{V}$ contains a subgraph $\mathcal{V}'$ such that every vertex is $cC\#T$-good, where $c<\min\{(3p-2)/p,1/2\}$. 
In other words, the vertex set of $\mathcal{V}'$ is $cC$-satisfactory with respect to $T$.
The choice of constants in the statement is somewhat arbitrary: we take $p=5/6$, $C_1=1/24$ and $c=1/3$.  

\end{proof}

In order to derive a lower bound for F{\o}lner function from Theorem \ref{thm:satisfactorysets},
we need to estimate the volume of a set $V'$ which is $C$-satisfactory with respect to $T$.
In the situation that the set $T$ is contained in a product subgroup $\prod B_{i}$, one can bound volume of $V'$
from below by the following lemma. It is analogous to
 \cite[Lemma 3]{erschlerfoelner}

\begin{lem}[Satisfactory sets for $G$ containing $\oplus B_{i}$]
\label{lem:generalizedLemma3} 
Let $G$ be a group containing as a subgroup $H=\oplus B_{i}$, $i\in I$,
$I$ is a countable set. Let $S_{i}$ be a
subset of $B_{i}$. Given a subset $V\subset G$, consider
the cardinality of the following set: for $g\in V$, $k\in\mathbb{N}$,
$$M_k(g)=\#\{i\in I: \mbox{ there exists at least }k \mbox{ distinct } s_i \neq e,s_i\in S_i \mbox{ such that } gs_i \in V \}.$$
If for each $g\in V$,
the cardinality $\#M_k(g)\ge m$, then the cardinality of the set $V$
is at least $(1+k)^{m}$. \end{lem}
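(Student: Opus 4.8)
The plan is to argue by induction on $m$, using the decomposition of $V$ into cosets of sub-sums of $\oplus B_{i}$. The base case $m=0$ is immediate: the sets $V$ to which the lemma is applied are non-empty, so $\#V\ge 1=(1+k)^{0}$. For the inductive step, assume the assertion for $m-1$ and let $V$ be such that $\#M_{k}(g)\ge m\ge 1$ for every $g\in V$ (here and below $M_{k}(g)$ denotes the set of indices $i$ for which there are at least $k$ distinct $s_{i}\in S_{i}\setminus\{e\}$ with $gs_{i}\in V$). First I would fix any $g_{0}\in V$ and, since $\#M_{k}(g_{0})\ge m\ge 1$, pick an index $i_{0}\in M_{k}(g_{0})$ together with $k$ distinct elements $s^{(1)},\dots,s^{(k)}\in S_{i_{0}}\setminus\{e\}$ with $g_{0}s^{(j)}\in V$. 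Writing $s^{(0)}=e$ and $g_{j}=g_{0}s^{(j)}$ for $0\le j\le k$, these are $k+1$ distinct elements of $V$. Let $K=\oplus_{i\ne i_{0}}B_{i}\le G$ and set $V_{j}=V\cap g_{j}K$ for $0\le j\le k$, that is, the part of $V$ lying in the left coset $g_{j}K$.

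The two things to check are: (i) $V_{0},\dots,V_{k}$ are pairwise disjoint and each is non-empty; and (ii) each $V_{j}$ satisfies the hypothesis of the lemma with $m$ replaced by $m-1$, where now the good-index set $M_{k}(g)$ of an element $g\in V_{j}$ is computed relative to $V_{j}$ rather than $V$. For (i), $g_{j}\in V_{j}$ so $V_{j}\ne\emptyset$; and if $g_{j}K=g_{j'}K$ then $g_{j}^{-1}g_{j'}=(s^{(j)})^{-1}s^{(j')}$ lies in $B_{i_{0}}\cap K=\{e\}$, the intersection being trivial because $\oplus B_{i}$ is a direct sum, so $s^{(j)}=s^{(j')}$ and hence $j=j'$. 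For (ii), take $g\in V_{j}$ and an index $i\ne i_{0}$ that is good for $g$ relative to $V$; for each witnessing $s_{i}\in S_{i}\setminus\{e\}$ we have $gs_{i}\in V$, and since $s_{i}\in B_{i}\subseteq K$ and $g\in g_{j}K$ we also get $gs_{i}\in g_{j}K$, hence $gs_{i}\in V_{j}$. Thus every index good for $g$ other than $i_{0}$ stays good relative to $V_{j}$, so $\#M_{k}(g)$ computed in $V_{j}$ is at least $m-1$. (In fact $i_{0}$ can never be good relative to $V_{j}$, since right-multiplying an element of $g_{j}K$ by a non-identity element of $B_{i_{0}}$ leaves the coset $g_{j}K$; this explains why no index is ever reused, though the remark is not needed.)

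Granting (i) and (ii), the inductive hypothesis gives $\#V_{j}\ge(1+k)^{m-1}$ for each $j$, and disjointness yields $\#V\ge\sum_{j=0}^{k}\#V_{j}\ge(k+1)(1+k)^{m-1}=(1+k)^{m}$, which closes the induction. I do not anticipate a genuine obstacle: the argument is a clean recursion, and the only place demanding attention is the bookkeeping in (ii)—checking that passing to the coset $g_{j}K$ preserves exactly the good indices we want and automatically forbids reusing $i_{0}$—which is precisely where the direct-sum hypothesis (as opposed to an unrestricted direct product) is used. An equivalent way to organise the same proof is to build directly, by a greedy branching process, a rooted $(k+1)$-ary tree of depth $m$ whose $(k+1)^{m}$ leaves carry pairwise distinct labels in $V$.
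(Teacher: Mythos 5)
Your proof is correct and follows essentially the same route as the paper's: induction on $m$, fixing one index $i_0$ and splitting $V$ into pieces determined by the $B_{i_0}$-coordinate (equivalently, cosets of $\oplus_{i\ne i_0}B_i$), then applying the inductive hypothesis to each of at least $k+1$ nonempty pieces. The only cosmetic difference is that the paper first reduces to the case $V\subset H$ via cosets of $H$ in $G$, whereas you work directly with cosets of $K=\oplus_{i\ne i_0}B_i$ in $G$; this collapses the paper's two-step reduction into one and makes the bookkeeping in step (ii) slightly more explicit, but it is the same argument.
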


\textbf{Proof}. Induction on $m$. The statement is obviously true for $m=1$.
Suppose we have proved the statement
forf $m-1$. Consider the set of coset classes $G=\cup g_{\alpha}H$,
$G_{\alpha}:=g_{\alpha}H$. $V=\cup V_{\alpha}$. If $V$ satisfies
the assumption, all $V_{\alpha}$ satisfy it also. It is enough
therefore to assume that $V$ belongs to one coset class, and without
loss of the generality we can assume that $V\subset H=\prod B_{i}$. 
Fix one position $i$ where the value $B_{i}$ takes at least $k+1$ possible
values. For each fixed value apply the induction hypothesis for $m-1$.
The cardinality of $V$ is at least $(k+1)$ times as much.

\begin{proof}[Proof of Corollary \ref{thm:inequality}]

Let $G$ be a finitely generated group, $S$ is a finite generating
set, $N(n,k)$ defined as the statement. For each index $i\in N(n,k)$, choose $k$ distinct elements 
$\{b_{1,i},\cdots,b_{k,i}\}$ from the set $\{b\in B_i:\ b\neq e, l_S(b)\le n\}$. Define $T$ as the union of these elements
$$T=\bigcup_{i\in N(n,k)}\{b_{1,i},\cdots,b_{k,i}\}.$$
Note that by its definition, $T$ is a symmetric set.

Consider a F{\o}lner set $V\subset G$ such that $\#\partial V/\#V\le1/n$.
Then by Theorem \ref{thm:satisfactorysets}, $V$ contains a subset $V'$
which is $C$-satisfactory with respect to the set $T$ defined above. 
It follows from the definition of $C$-satisfactory and the structure of $T$ that 
for any $g\in V'$,
$$M_{k'}(g)\ge \frac{C}{2-C}N(n,k),\ \mbox{ where } k'=\max\{1,Ck/2\}.$$
By Lemma \ref{lem:generalizedLemma3} the cardinality of such set $V'$
is at least $(k'+1)^{\frac{C}{2-C} N(n,k)}$. Consequently, the cardinality of $V$ admits the same lower bound.
We conclude that the F{\o}lner function
of $G$ satisfies 
$$\fol_{G}(n)\ge (k'+1)^{\frac{C}{2-C} N(n,k)}\ge (k+1)^{C'N(n,k)},$$
where $C,C'$ are some universal constants.
\end{proof}

\begin{proof}[Proof of the statement of Example\ref{grigorchuk}]
We apply Corollary \ref{thm:inequality} to show that the F{\o}lner function of the first Grigorchuk group is at least exponential.

We recall the definition of the first Grigorchuk group $G=G_{012}$, which was defined by Grigorchuk in \cite{grigorchuk80}. The group $G_{012}$ acts on the rooted binary tree $T$, it is generated by automorphisms $a,b,c,d$ defined as follows. The automorphism $a$ permutes the two subtrees of the root. The automorphim $b,c,d$ are defined recursively: 
$$b=(a,c),\ c=(a,d),\ d=(1,b).$$
For more details see \cite{grigorchuk80, grigorchuk85} and also \cite[Chapter VIII]{delaharpe}, \cite[Chapter 1]{BGS}.

The rigid stabilizer of a vertex $u$ in $G$, denoted by ${\rm{Rist}}_G(u)$ is the subgroup of $G$ that consists of these automorphisms that fix all vertices not having $u$ as a prefix. By definition it is clear that automorphisms in rigid stabilizers of different vertices on a given level commute. To obtain a lower bound for the F{\o}lner function, given a level $k$, consider the subgroup $\oplus_{u\in T_k} \rm{Rist}_G(u)$, where $T_k$ denotes the level $k$ vertices in the tree. We show that for in each summand ${\rm{Rist}}_G(u)$, $u\in T_k$, there is an element $g_u\in {\rm{Rist}}_G(u)$ such that $g_u\neq e$ and $l_S(g)\le 6\cdot2^k$. 

Consider the substitution $\sigma$ which was used in the proof of the growth lower bound in \cite[Theorem 3.2]{grigorchuk85}:
$$\sigma(a)=aca,\ \sigma(b)=d,\  \sigma(c)=b,\ \sigma(d)=c.$$
First take $g_{1^k}=\sigma^k(abab)$, where $1^k=\underbrace{1 \cdots 1}_k$. We verify that $g_{1^k}$ is a non-trivial element in the rigid stabilizer of the vertex $1^k$. To show this, it suffices to have 
$$g_{1^k}=(1, \sigma^{k-1}(abab)),$$
where $1$ is the identity element.
By the definition of $\sigma$, we have that for any word $w$ in the letters $\{a,b,c,d\}$, $\sigma(w)$ is in the stabilizer of the first level of the tree, hence we can write 
$\sigma(w)=(w_1,w_2)$.
Observe that $w_2=w$ and 
$w_1$ can be obtained from $w$ by sending $a\to d,\ b\to \emptyset,\ c\to a,\ d\to a$, where $\emptyset$ denote the empty word.
Note that the word $w_1$ is on letters $\{a,d\}$ only. In the first iteration, $\sigma(abab)=acadacad$, by direction calculation we have that 
$$g_1=\sigma(abab)=(1,abab).$$
For $k\ge 1$, $\sigma^k(ab)$ is a word in blocks of $acab$, $acac$ and $acad$. Under the substitution,
$$\sigma(acab)=(dad,acab),\ \sigma(acac)=(dada,acab),\ \sigma(acad)=(dada,acad).$$
Let $w_k$ be the word on the left branch of $\sigma^{k+1}(abab)$ under the wreath recursion. Then $w_k$ is a product of blocks $dad$ and $dada$. Since $a,d$ generate the dihedral group $\langle a,d|a^2=1,d^2=1, (ad)^4=1\rangle$, we have that $(dad)^2=1,\ (dada)^2=1$ and the two elements $dad, dada$ commute.  
Therefore $w_k$ can only evaluate to $dad, dada$ or their product $daddada=a$ in $\langle a,d\rangle$.
In each case we have that $w_kw_k$ evaluates to the identity element. It follows that  
$$\sigma^{k+1}(abab)=(w_kw_k,\sigma^k(ab)\sigma^k(ab))=(1,\sigma^k(abab)).$$
We have proved the claim that $g_{1^{k+1}}=(1, g_{1^k}).$ 
Since $\sigma$ doubles the word length in each iteration, we have that $l_S(g_{1^k})\le 2^{k+2}$.

To get nontrivial elements in ${\rm{Rist}}_u$ for other vertices in level $k$, we take appropriate conjugations of the element $g_{1^k}$. Since the Schreier graph of the level-$k$ vertices with respect to generators $\{a,b,c,d\}$ is connected (and has diameter $2^k$), for any $u\in T_k$, we can fix an element $\rho_u\in G$ such that $u\cdot \rho_u=1^k$ and $l_S(\rho_u)\le 2^k$. Take 
$$g_u=\rho_u g_{1^k} \rho_u^{-1}.$$
Since $g_{1^k}$ is a nontrivial element in ${\rm{Rist}}_G(1^k)$, it is clear that $g_u$ is a nontrivial element in ${\rm{Rist}}_G(u)$.  
We have that $l_S(g_u)\le 2^k+2^{k+2}+2^k=6\cdot 2^k$.
Apply Corollary \ref{thm:inequality} with $H=\oplus_{u\in T_k} {\rm{Rist}}_G(u)$ and $n=6\cdot 2^k$, we have that for some absolute constant $C>0$,
$$\fol_{G,S}(6\cdot 2^k)\ge 2^{C2^k},$$
which is valid for all $k$. In other words, we have an exponential lower bound $\fol_{G,S}(n)\ge 2^{Cn/6}$.

The same argument applies to more general Grigorchuk group $G_{\omega}$, where $\omega$ is not eventually constant. Namely, by taking appropriate substitutions, we can find nontrivial elements in ${\rm{Rist}}_{G_\omega}(u)$ of word length at most $6\cdot 2^k$ for every $u\in T_k$. It follows by Corollary \ref{thm:inequality} that there is an absolute constant $C>0$ such that for any $\omega\in \{0,1,2\}^{\mathbb{N}}$ which is not eventually constant, 
$$\fol_{G_{\omega},S}(n)\ge 2^{Cn}.$$
By \cite{grigorchuk85}, these groups $G_{\omega}$ have sub-exponential volume growth.

\end{proof}


\section{Abelian extensions of nilpotent groups: first examples}\label{step2}

In this section we consider step-$2$-nilpotent-by-cyclic groups, for which we denote by $G=G_{{\rm \Z,Nil,2}}$, $G_{D,2}$, and $G_{D,2,\mathbf{k}}$, see definitions below. Their F{\o}lner functions can be estimated by applying the isoperimetric inequalities in the previous section.

Consider $2$-step free nilpotent group $N=N_{\Z}$,  with
generators $b_{i}$, $i\in\Z$. Let $N_{\Z,2}$ be the quotient
of $N$, over relations $b_{i}^2=1$  for all $i\in \Z$.  
Let $\Z$ act on the
group $N_{\Z,2}$ by shifts of the index set, and consider the extension
group $G=G_{{\rm \Z,Nil,2}}$.

Consider $N$ and $b_i$, $i\in\Z$, as above, put  $b_{i,j}=[b_{i},b_{j}]$, $i>j$. 
Note that $b_{i,j}^2=1$
and $[b_{j},b_{i}]=[b_{i},b_{j}]^{-1}=b_{i,j}^{-1}$.
Given a subset $D\subset\Z$, consider a subgroup $B_{D}$ of the
group generated by $b_{i,j}$, which is generated by $b_{i,j}$ such
that $i-j\in D$. Observe that $B_{D}$ is a normal subgroup of $G_{{\rm \Z, Nil,2}}$.
Indeed, subgroup generated by $b_{i,j}$ is central in the subgroup
generated by $b_{i}$, and the action an element $z\in\Z$ sends $b_{i,j}$
to $b_{i+z,j+z}$, and preserves $i-j$.

Denote by $G_{D,2}$ the quotient group of $G_{{\rm \Z, Nil,2}}$ over $B_{D}$.

Moreover, given a sequence $\mathbf{k}=(k_{j})$ with $\ k_j\in\N$, $j\in \Z$, consider the
quotient of $G=G_{{\rm \Z,Nil,2}}$ over relations 
$$b_{i,i+j}=b_{i+k_{j},i+j+k_{j}}.$$
A subgroup generated by such elements is a normal subgroup, we denote
by $G_{{\rm \Z,Nil,2,\mathbf{k}}}$ the quotient of $G=G_{{\rm \Z,Nil,2}}$
by this normal subgroup and we denote by $G_{D,2,\mathbf{k}}$ the quotient
of $G_{D,2}$ by the image of this subgroup in $G_{D,2}$.

\begin{exa}\label{hall}
When $\mathbf{k}$ is constant $1$, which means $[b_i,b_j]=[b_{i+k},b_{j+k}]$ for all $i,j,k\in\Z$,
the groups $G_{ D,2,\mathbf{1}}$ were considered in P. Hall \cite{phall}. 
Note that in this special case $G_{ D,2,\mathbf{1}}$ is a central extension of $\Z\wr(\Z/2\Z)$.
\end{exa}

The groups $G_{D,2}$ and $G_{D,2,\mathbf{k}}$ admit the following normal forms for elements. 
The group $G_{{\rm \Z, Nil,2}}$ is generated by two element: $b_{0}$ and a generator
$z_{0}$ of $\Z$. We use this generating set $\{b_0,z_0\}$ for its quotients as well.
Any element of $G_{{\rm Nil,2}}$ can be written a $(f,z)$,
where $z\in\Z$ and $f=\prod_{-\infty}^{\infty}b_{i}^{\epsilon_i}\prod_{i<j}b_{i,j}^{\epsilon_{i,j}}$, 
where each $\epsilon_i,\epsilon_{i,j}\in\{0,1\}$. In the product there are only finitely many terms with $\epsilon_i$ or $\epsilon_{i,j}$ nonzero. 
Note that since the elements $b_{i,j}$ are in the center of $N_{\Z,2}$, the ordering of $b_{i,j}$ doesn't matter.
Similarly, any element of $G_{D,2}$ can be written as $(f,z)$, 
where $z\in\Z$,
$$f=\prod_{-\infty}^{\infty}b_{i}^{\epsilon_i}\prod_{i<j,j-i\notin D}b_{i,j}^{\epsilon_{i,j}}.$$
In the further quotient $G_{D,2,\mathbf{k}}$, an element  can be written as $(f,z)$,
where $z\in\Z$,
$$f=\prod_{-\infty}^{\infty}b_{i}^{\epsilon_i}\prod_{i<j,j-i\notin D,0\le i<k_{j-i}}b_{i,j}^{\epsilon_{i,j}}.$$
Given a word in $z_0^{\pm 1},b_0$, the standard commutator collecting procedure (see \cite[Ch.11]{mhall})
rewrites it into the normal form described above.

As an illustration of the isoperimetric inequality in Corollary \ref{thm:inequality},
we first consider the following example.
\begin{exa}
The F{\o}lner function of the group $G_{{\Z,\rm Nil,2}}$ is asymptotically
equivalent to $\exp(n^{2})$.
\end{exa}
\begin{proof}
 Consider the subgroup $B_r\subset B_{\Z}$ generated by $b_{i,j}$,
$-r\le i<j\le r$. The length of each $b_{i,j}$ is at most $C'r$, and $B_r$ is isomorphic to the product of $\langle b_{i,j}\rangle$, $-r\le i<j \le r$.
Applying Corollary \ref{thm:inequality} we conclude that the F{\o}lner
function of $G$ is at least $\exp(Cn^{2})$.

The upper bound that the F{\o}lner function is bounded by $\exp(C'n^2)$ is explained in the proof of (i) of the Corollary \ref{cor:foelnerGDkj} below,
applied by taking $D$ to be the empty set. 
\end{proof}


More generally, Corollary \ref{thm:inequality} provides an optimal
lower bound for the F{\o}lner functions of various quotients of $G_{{\rm Nil,2}}$.


\begin{figure}\label{figure1}
\includegraphics[scale=1]{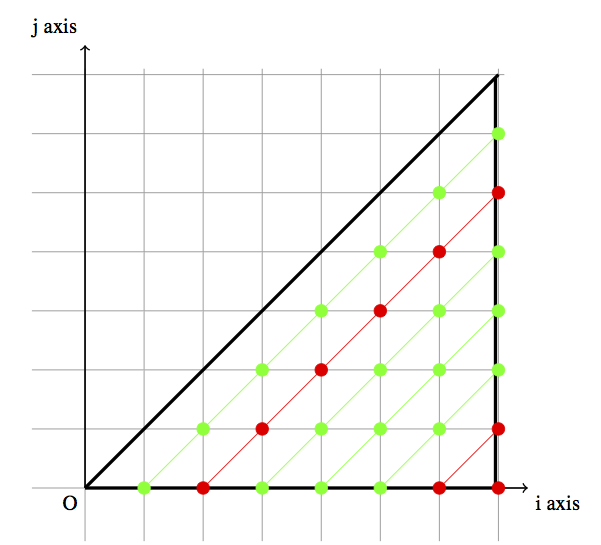}
\caption{Supports of configurations $b_{i,j}$ for $g$ in the F{\o}lner set
$\Omega(n)$ (of the group $G_{\Z,Nil,2}$) are nodes inside the black
triangle; for $\Omega_{D}(n)$ (in the group $G_{D,2}$) these are green
nodes inside the black triangle: green diagonals consist of $i,j,i>j$
such that $i-j\protect\notin D$; this picture is for $n=7$, $D$
contains $2$, $6$ and $7$ and does not contain $1$, $3$, $4$
and $5$}
\end{figure}

\begin{cor}{[}F{\o}lner function of $G_{D,2}$ and $G_{D,2,\mathbf{k}}$.{]}
\label{cor:foelnerGDkj} 

i) Let $\rho_{D}(n)$ be the cardinality
of $\N\setminus D\cap[1,n]$ and $D\subset\N$.
Then the F{\o}lner function of $G_{D,2}$ is asymptotically equivalent
to 
\[
\exp(n\rho_{D}(n)+n).
\]
In particular, for any non-decreasing $\rho(n)$ such that $\rho(n+1)-\rho(n)\le1$
the function $\exp(n\rho(n)+n)$ is a equivalent to a F{\o}lner function
of $G_{D}$, for some choice of a subset $D\subset\N$. 

ii) Let $\tau_{D,\mathbf{k}}(n)=\sum_{j:1\le j\le n,j\notin D}\min\{k_{j},n\}$.
Then the F{\o}lner function of $G_{D,\mathbf{k}}$ satisfies
$$\exp(Cn+C\tau_{D,\mathbf{k}}(Cn))\le \fol_{G_{D,\mathbf{k}}}(n)\le\exp(4n+2\tau_{D,\mathbf{k}}(2n)).$$
In particular, for any non-decreasing function $\tau$ such that $\tau(n+1)-\tau(n)\le n$
the function $n+\tau(n)$ is asymptotically equivalent to 
$\log\fol_{G}(n)$ for some $G=G_{D,2,\mathbf{k}}$.
\end{cor}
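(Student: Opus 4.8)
The plan is to prove the lower and upper bounds separately, following the template established in part (i) (which handles the case $\mathbf{k}\equiv\infty$, i.e.\ $G_{D,2}$).

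For the lower bound $\fol_{G_{D,\mathbf{k}}}(n)\ge \exp(Cn+C\tau_{D,\mathbf{k}}(Cn))$, I would apply Corollary \ref{thm:inequality} with a product subgroup built from the commutators. Fix $n$ and set $r\asymp n$. For each $j$ with $1\le j\le r$ and $j\notin D$, the relations $b_{i,i+j}=b_{i+k_j,i+j+k_j}$ make the subgroup generated by $\{b_{i,i+j}: i\in\Z\}$ a direct sum of roughly $\min\{k_j,\#\{\text{available }i\text{ in range}\}\}\asymp\min\{k_j,r\}$ copies of $\Z/2\Z$ (or $\Z$ — but these are $2$-torsion, so $\Z/2\Z$), once we restrict to commutators $b_{i,i+j}$ with both $i,i+j$ in $[-r,r]$. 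Each such generator has $S$-length at most $C'r\asymp n$. Since commutators on distinct ``diagonals'' $j$ and commutators within a diagonal (in distinct fundamental domains $0\le i<k_j$) all commute, the subgroup $H$ generated by all of them over $1\le j\le r$, $j\notin D$ is a direct sum with $N(n,1)\asymp\sum_{1\le j\le r,\,j\notin D}\min\{k_j,r\}\asymp\tau_{D,\mathbf{k}}(r)$ nontrivial cyclic factors, plus the $\asymp r$ factors coming from the $b_i$ themselves. Corollary \ref{thm:inequality} with $k=1$ then yields $\fol_{G_{D,\mathbf{k}}}(n)\ge 2^{C(r+\tau_{D,\mathbf{k}}(r))}$, which is the claimed bound after adjusting constants. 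The only care needed is counting: how many of the $2r+1$ indices $i\in[-r,r]$ actually give a commutator $b_{i,i+j}$ with $i+j\le r$, and then how many distinct values these take modulo the identification $i\sim i+k_j$ — this is $\min\{k_j, \Theta(r)\}$ up to a universal factor, so the sum is $\Theta(\tau_{D,\mathbf{k}}(\Theta(r)))$.

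For the upper bound $\fol_{G_{D,\mathbf{k}}}(n)\le \exp(4n+2\tau_{D,\mathbf{k}}(2n))$, I would exhibit an explicit F\o lner set. Take $\Omega(n)$ to be the set of elements $(f,z)$ with $|z|\le n$ and $f$ supported, in the normal form $f=\prod b_i^{\epsilon_i}\prod_{i<j,\,j-i\notin D,\,0\le i<k_{j-i}}b_{i,j}^{\epsilon_{i,j}}$, on indices $i\in[-n,n]$ for the $b_i$-part and on pairs $(i,j)$ with $-n\le i<j\le n$, $j-i\notin D$, $0\le i<k_{j-i}$ for the commutator part. The cardinality of $\Omega(n)$ is $(2n+1)\cdot 2^{\#(\text{surviving }b_i)}\cdot 2^{\#(\text{surviving commutator slots})}$; the exponent of the commutator part is exactly $\sum_{1\le \ell\le 2n,\,\ell\notin D}\min\{k_\ell, 2n+1-\ell\}$ which is between $\tfrac12\tau_{D,\mathbf{k}}(n)$ and $\tau_{D,\mathbf{k}}(2n)$ up to constants, so $\#\Omega(n)\le \exp(4n+2\tau_{D,\mathbf{k}}(2n))$ after bookkeeping. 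One then checks that multiplying an element of $\Omega(n)$ by a generator $z_0^{\pm1}$ or $b_0$ changes at most a controlled number of coordinates and moves one outside $\Omega(n)$ only for a proportion $O(1/n)$ of the elements — exactly as in the proof of part (i) and the preliminary $G_{\Z,\mathrm{Nil},2}$ example, using the commutator collecting procedure to return to normal form. This gives $\#\partial_S\Omega(n)/\#\Omega(n)=O(1/n)$, hence the upper bound on the F\o lner function.

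The last sentence — that any non-decreasing $\tau$ with $\tau(n+1)-\tau(n)\le n$ is realized, up to $\simeq$, as $\log\fol_{G_{D,2,\mathbf{k}}}$ for a suitable $D,\mathbf{k}$ — follows by reverse-engineering the sequence $(k_j)$: the increment constraint $\tau(n+1)-\tau(n)\le n$ is precisely what the summand $\min\{k_n,n\}\le n$ in $\tau_{D,\mathbf{k}}$ can contribute at step $n$, so setting $D=\emptyset$ and choosing $k_n$ so that $\min\{k_n,n\}=\tau(n+1)-\tau(n)$ (when the right side is positive; otherwise put $n\in D$) makes $\tau_{\emptyset,\mathbf{k}}(n)\asymp \tau(n)$, and then part (ii) gives $\log\fol_{G}(n)\simeq n+\tau(n)$.

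\medskip

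The main obstacle I anticipate is not conceptual but combinatorial: getting the counting of ``surviving commutator slots'' in both the subgroup $H$ (lower bound) and the F\o lner set $\Omega(n)$ (upper bound) to match $\tau_{D,\mathbf{k}}$ up to the right constants, while correctly accounting for the two competing truncations — the diagonal index $j-i=\ell$ ranging over $[1,n]\setminus D$, and for each such $\ell$ the fundamental-domain index $i$ ranging over $[0,\min\{k_\ell,\Theta(n)\})$. The potential for off-by-constant-factor slippage between $\tau_{D,\mathbf{k}}(n)$, $\tau_{D,\mathbf{k}}(2n)$, and $\tau_{D,\mathbf{k}}(Cn)$ is exactly why the statement is phrased with unspecified constants $C$ rather than as a clean asymptotic equivalence; I would be careful to verify that the subadditive-type growth of $\tau$ (inherited from $\tau(n+1)-\tau(n)\le n$, which gives $\tau(2n)\le \tau(n)+O(n^2)$, not obviously $O(\tau(n))$) does not actually break the final $\simeq$ claim — here one uses that $\log\fol$ already contains the additive $n$ term, and $\exp$ of a quadratic-in-$n$ error is absorbed into the $\exp(n^2)$-type behavior only when $\tau$ itself is at least quadratic; for smaller $\tau$ one must check the intervals where $\tau$ is essentially linear separately. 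This regularity check at the end is the subtle point.
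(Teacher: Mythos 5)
Your proposal matches the paper's argument for part (ii): the lower bound via Corollary \ref{thm:inequality} applied to the commuting family of surviving commutators $b_{i,i+j}$ inside a ball (counted up to the $\Z/k_j\Z$-identification), the upper bound via the explicit test set $\Omega_{D,\mathbf{k}}(n)$ whose boundary-to-volume ratio is computed exactly as in part (i), and the realization of a prescribed $\tau$ by reverse-engineering $(D,\mathbf{k})$ from the increments $\lfloor\tau(n)\rfloor-\lfloor\tau(n-1)\rfloor$. Your concern in the final paragraph dissolves once one notes that the ``in particular'' claim is a $\simeq$-equivalence between $n+\tau(n)$ and $\log\fol_{G}(n)$, not between $\exp(n+\tau(n))$ and $\fol_G(n)$: from $Cn+C\tau(Cn/2)\le\log\fol_G(n)\le 4n+2\tau(2n)$ and monotonicity of $\tau$ one gets $n+\tau(n)\simeq\log\fol_G(n)$ directly, the additive $n$ term absorbing the constant $C$ in front of the linear part, with no extra regularity on $\tau$ needed. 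The paper's Remark following the Corollary makes precisely the point you worried about: one cannot in general upgrade to $\fol_G(n)\simeq\exp(n+\tau(n))$ because $\tau(2n)$ need not be $O(\tau(n))$, which is why the statement is phrased at the level of $\log\fol_G$.
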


\begin{figure}\label{figure2}
\includegraphics[scale=1]{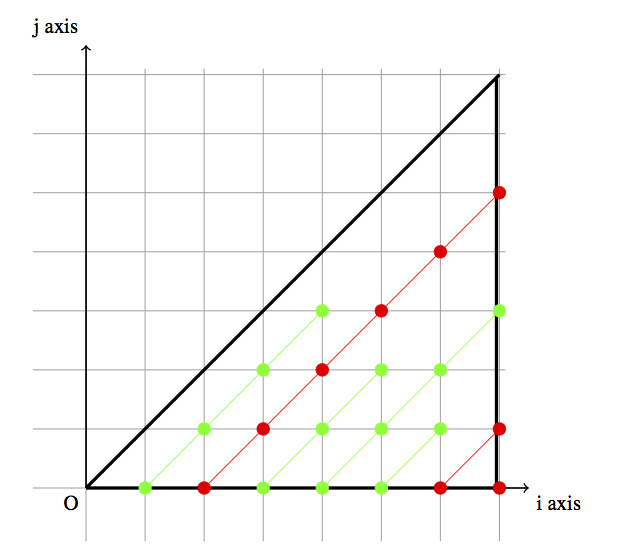}

\caption{Supports of configurations $b_{i,j}$ for $g$ inside the F{\o}lner
set $\Omega_{D,k_{j}}(n)$ (of the group $G_{D,2,\mathbf{k}}$) are green
nodes of truncated diagonals inside the black triangle. The choice
the set of green diagonals (consisting of $i,j:i-j\protect\notin D$)
and the choice of the lengths $k_{i}$ of truncated diagonals is
arbitrary in the definition of $G_{D,2,\mathbf{k}}$; this picture is for
$n=7$, $D$ contains $2$, $6$ and $7$ and does not contain $1$,
$3$, $4$, and $5$, $k_{1}=5$, $k_{3}=4$, $k_{4}\ge5$, $k_{5}=3$}
\end{figure}

\begin{proof}

i). We say that a configuration $f$ is contained in an interval $I$ if in the normal form of $f$, all the nonzero entries $\epsilon_i$, $\epsilon_{i,j}$ satisfy $i,j\in I$.
For $D\subset\Z$ consider the subset $\Omega_{D}(n)$ of $G_{D,2}$,
$$\Omega_{D}(n)=\{(f,z): 0\le z\le n,\mbox{ the configuration }f \mbox{ is contained in }[0,n]\}.$$

Observe that if $g\in\Omega_{D}(n)$, then $zb_{0}\in\Omega_{D}(n)$.
Therefore, if $g\in\partial\Omega_{D}(n)$, then $gz_{0}^{\pm1}\notin\Omega_{D}(n)$
and $z=0$ or $z=n$. For $g\in\Omega_{D}(n)$, consider its \char`\"{}slice\char`\"{}
in $\Omega_{D}(n)$: $h\in \Omega_{D}(n)$ such that $h=fz_{0}^{m}$,
here $z_{0}$ is a generator of $\Z$ and $m$ is some integer. Observe
that for each $g\in\Omega_{D}(n)$, the cardinality of such slice
is equal to $n+1$ and exactly two points of each slice belong to
the boundary f $\partial \Omega_{D}(n)$, for all $n\ge1$.

Therefore for all $n\ge1$

\[
\frac{\#\partial\Omega_{D}(n)}{\#\Omega_{D}(n)}=\frac{2}{n+1}.
\]
Observe that the cardinality of $\Omega_{D}(n)$ is equal to 
\[
(n+1)2^{n+1+\sum_{i\in[0,n]\cap{\N\setminus D}}(n+1-i)}.
\]
It implies that for any $D$, the F{\o}lner function of $G_{D,2}$
is at most $2n\exp(2n+2n\rho_{D}(2n))$.

Let us show that Corollary \ref{thm:inequality} implies that there
exists $C>0$ such that the F{\o}lner function of $G_{D,2}$ is greater
than $\exp(Cn\rho_{D}(Cn))$. 
Consider the collection of $b_{i,j}$ with $0\le i<j\le n$, $j-i\notin D$. The length of $b_{i,j}$ is bounded by
$l_S(b_i,j)\le 4n$, and the subgroup they generate is isomorphic to a product $\prod\langle b_{i,j}\rangle$ where 
in the product there are $\sum_{i\in[0,n]\cap{\N\setminus D}}(n+1-i)$ copies of $\Z/2\Z$.
By Corollary \ref{thm:inequality}, we have that 
$$\fol_{G_{D,2}}(n)\ge \exp\left(C\sum_{i\in[0,n]\cap{\N\setminus D}}(n+1-i) \right)\ge \exp\left(\frac{Cn}{2}\rho_D(n/2)\right).$$
Since $G_{D,2}$ is of exponential growth, we also have $\fol_{G_{D,2}}(n)\ge \exp(Cn)$.
We conclude that 
$$\fol_{G_{D,2}}(n)\sim \exp(n(\rho_D(n)+1)).$$
Finally, given any non-decreasing function $\rho(n)$ with $\rho(n+1)-\rho(n)\le 1$, we can select the set $D\subset \N$ such that
$$\N\setminus D\cap [0,n]\sim \rho(n).$$
Then the corresponding group $G_{D,2}$ has F{\o}lner function as stated. 
\\


The proof of  ii) is similar.
Given a subset $D\subset\N$ and a sequence $\mathbf{k}=(k_{j})$, consider the
function 
\[
\tau_{D,\mathbf{k}}(n)=\sum_{i\in[0,n]\cap(\N\setminus D)}\min\{n,k_{i}-i\}.
\]
With the same argument as in the previous part we see that the
cardinality of $\Omega_{D,\mathbf{k}}$ is equal to 
\[
(n+1)2^{n+1+\sum_{i\in[0,n]\cap{\N\setminus D}}\min\{n+1-i,k_{i}\}},
\]
and that for all $n\ge1$

\[
\frac{\#\partial\Omega_{D,\mathbf{k}}(n)}{\#\Omega_{D,\mathbf{k}}(n)}=\frac{2}{n+1}.
\]

Observe that 
\[
\tau_{D,\mathbf{k}}(n/2)\le\sum_{i\in[0,n]\cap{\N\setminus D}}\min\{n+1-i,k_{i}\}\le \tau_{D,\mathbf{k}}(n).
\]


The F{\o}lner function of $G_{D,2,\mathbf{k}}$ is therefore at most $\exp(n+\tau_{D,\mathbf{k}}(n))$.
By the same argument as in the previous part, from Corollary \ref{thm:inequality} we know that the F{\o}lner
function of $G_{D,2,\mathbf{k}}$ is greater than $\exp(Cn+C\tau_{D,\mathbf{k}}(Cn))$
for some $C>0$.

Given a prescribed non-decreasing function $\tau$ with $\tau(n+1)-\tau(n)\le n$, 
we select $D\subset\N$ and $k_{j}$, $j\in\mathbb{N}$ by the following rule:
if $\lfloor\tau(n)\rfloor=\lfloor\tau(n-1)\rfloor$, then $n\in D$; otherwise $n\notin D$ and 
$k_n=\lfloor\tau(n)\rfloor-\lfloor\tau(n-1)\rfloor$.
Then we have
$$\tau(n/2)\le\tau_{D,\mathbf{k}}(n)\le \tau(n).$$
Then the statement follows.

\end{proof}

\begin{rem}
Under the assumptions of part ii) of Corollary \ref{cor:foelnerGDkj}, we are not able to say that $\fol_{G_{D,\mathbf{k}}}(n)$ is asymptotically equivalent to 
$\exp(n+\tau(n))$ because it is not necessarily true that there exists a constant $A>1$ such that $\tau(An)\ge2\tau(n)$
for all $n$. For example, such constant $A$ doesn't exist for the following function $\tau$: 
take a fast growing sequence $(n_i)$ such that $n_{i+1}$ is much larger than $n_i^2$, 
and take $\tau$ to be equal to $n_i^2$ on the interval $[n_i,n^2_{i}]$ and linear in between such intervals.
\end{rem}

\section{Further examples. Groups with strengthened versions of Property $H_{\rm FD}$}\label{stepc}

For nilpotent groups of higher nilpotency class, 
we can take cyclic extensions similar to the previous section. 
We consider in this section two specific constructions.
In these examples, the group $G$ is a quotient of the semi-direct product $N_{\Z, \rm{Nil},\mathfrak{c}}\rtimes\Z$, where $N=N_{\rm{Nil},\Z,\mathfrak{c}}$ is the step $\mathfrak{c}$ free nilpotent group generated by
$b_i$, $i\in\Z$, subject to relation $b_i^2=1$ for $i\in \Z$. 
The upper bound for the F{\o}lner function of $G$ follows from taking the standard test sets similar to 
these in Corollary \ref{cor:foelnerGDkj}. We apply the isoperimetric inequality in Corollary \ref{thm:inequality} to obtain lower bound 
for the F{\o}lner function: 
we count the rank of the subgroups in the center of the nilpotent group $N$ whose generators are inside the ball of distance $n$ in $G$. 
\subsection{A construction similar to $G_{D,2}$}
We first recall the notion of basic commutators on letters $b_{i}$,
$i\in\mathbb{Z}$, as in \cite[Ch.11]{mhall}. The basic commutators, together
with their weight and an order, are defined recursively as 
\begin{description}
\item [{$1$)}] $c_{i}=b_{i}$, $i\in\mathbb{Z}$ are the basic commutators
of weight $1$, $w(b_{i})=1$; ordered with $b_{i}<b_{j}$ if $i<j$.
\item [{$2$)}] Having defined the basic commutators of weight less than $n$,
the basic commutators of weight $n$ are $[u,v]$ where
$u$ and $v$ are basic with $w(u)+w(v)=n$, $u>v$,
and if $u=[u_1,u_2]$ then $u_2 \le v$.
\item [{$3$)}] For a basic commutator $u$ of weight $n$ and a basic commutator $v$ of weight  $n-1$, we have $u>v$. Commutators of the same weight can be ordered arbitrarily, we use the following: for commutators on different strings, order them lexicographically, and for different bracketing of the same string, order them arbitrarily with respect to each other. 

\end{description}
The basis theorem (see \cite[Theorem 11.2.4]{mhall}) states that the
basic commutators of weight $n$ form a basis for the free abelian
group $F_{n}/F_{n+1}$, where $F_{1}=F$ is the free group on generators
$\left(b_{i}\right)_{i\in\mathbb{Z}}$ and $F_{n}=[F,F_{n-1}]$. 
Observe that the relations $b_i^2=1$ imply that $u^2=1$ for any basic commutator $u$.

Consider the free nilpotent group $N_{\Z,\rm Nil \mathfrak{c}}$ as above and the extension $N_{\mathbb{Z},\rm Nil \mathfrak{c}}\rtimes\mathbb{Z}$,
where $\mathbb{Z}$ acts by translating generators of $N_{\mathbb{Z},\rm Nil \mathfrak{c}}$,
that is $b_{i}^{z}=b_{i+z}$. Note that $\Z$ acts also on commutators by translating the indices, and we write $u^z$ for the conjugation: for $u$ a basic commutator on string $(b_{i_1},...,b_{i_k})$, write $u^z$ for the same bracketing on the string $(b_{i_1+z},...,b_{i_k+z})$.

We will take quotients of the group
$N_{\mathbb{Z},\mathfrak{c}}\rtimes\mathbb{Z}$ in what follows. 
First we set some notations. Let $Q$ be a subset of basic commutators of $N_{\Z,\mathfrak{c}}$ satisfying
the condition that 
\begin{itemize}
\item If $u\in Q$, then $u^z\in Q$ as well for any $z\in\Z$,
\item If $[u,v]\in Q$ where $u$ and $v$ are basic commutators, then $u,v\in Q$ as well. 
\end{itemize}

Let $N_{\mathbb{Z},Q}$ be the quotient of $N_{\mathbb{Z},\mathfrak{c}}$
with the relations that all basic commutators except those in $Q^{\mathbb{Z}}$
are set to be identity. Note that such an operation is well defined
because the basic commutators form a basis in the free nilpotent group
$N_{\mathbb{Z},\mathfrak{c}}$ . 

The commutators in $Q$ inherit the ordering of basic
commutators. The standard word collecting process in $N_{\mathbb{Z},Q}$ (see \cite[Section 11.1]{mhall})
yields a normal form for elements: 
every element $f\in N_{\mathbb{Z},Q}$
can be written uniquely as an ordered product of $b_{j}$'s and basic
commutators in $Q$, that is 
\[
f=\prod_{{c}\in Q}c^{\delta_c}\prod_{j\in\mathbb{Z}}b_{j}^{\varepsilon_{j}},
\]
where $b_j,\delta_c\in\{0,1\}$ and only finitely many $b_j$ and $\delta_c$'s are $1$. 

Let $E_{d}(n)$ be the subset of weight $d$ basic commutators on letters from the set $\{b_{-n},\dots, b_n\}$. 
Write $E(n)=\cup_{d=2}^{\mathfrak{c}}E_{d}(n)$
and $E_d=\cup_{n=1}^{\infty}E_d(n)$.
By construction, basic commutators in $Q\cap E_d$ form a basis for the abelian
quotients $(N_{\mathbb{Z},Q})_{d}/(N_{\mathbb{Z},Q})_{d+1}$, for each $2\le d\le \mathfrak{c}$.

The extensions by $\Z$ of $N_Q$ is denoted by 
$G_{Q}=N_{Q}\rtimes\mathbb{Z}$.

By a similar reasoning as in the step $2$ case, we obtain a lower
bound of the F{\o}lner function by counting the number of basis element
in each abelian quotient.

\begin{cor}

Let $\rho_{Q}(n)$ be the cardinality of the set $Q\cap E(n)$. Then
there exists a constant $C$ depending on $\mathfrak{c}$ such that
the F{\o}lner function of $G_{Q}=N_{\mathbb{Z},Q}\rtimes\mathbb{Z}$
satisfies 
\[
Cn\exp\left(\rho_{Q}(n)\right)\ge \fol_{G_{Q}}(n)\ge\exp\left(\frac{1}{C}\rho_{Q}(n)\right).
\]

\end{cor}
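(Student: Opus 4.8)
The plan is to follow the same two-sided strategy already used for $G_{D,2}$ and $G_{D,2,\mathbf{k}}$ in Corollary \ref{cor:foelnerGDkj}, replacing the count of commutators $b_{i,j}$ by the count of basic commutators in $Q$ whose letters lie in a bounded window. For the upper bound, I would take as test sets
\[
\Omega_Q(n)=\{(f,z):0\le z\le n,\ f\mbox{ has normal form supported on letters }b_{-n},\dots,b_n\mbox{ and commutators in }Q\cap E(n)\}.
\]
Exactly as in the step-$2$ case, multiplying by $b_0^{\pm1}$ stays inside $\Omega_Q(n)$ while multiplying by $z_0^{\pm1}$ can only leave it when $z\in\{0,n\}$, so each $z$-slice has length $n+1$ with exactly two boundary points, giving $\#\partial\Omega_Q(n)/\#\Omega_Q(n)=2/(n+1)$. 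The cardinality of $\Omega_Q(n)$ is $(n+1)2^{\#(Q\cap E(n))+2n+1}=(n+1)2^{\rho_Q(n)+2n+1}$, hence $\fol_{G_Q}(n)\le Cn\exp(\rho_Q(n))$ after absorbing the linear term $2n+1$ into $\rho_Q$ (note $\rho_Q(n)\ge n$ since $E_2(n)$ alone contributes order $n$ commutators, or else one simply carries the $+Cn$; the statement as written hides this in the constant). One must check that the collecting process genuinely produces this normal form with only the claimed entries, but that is exactly the basis theorem \cite[Theorem 11.2.4]{mhall} applied to $N_{\Z,Q}$, already recalled in the text.

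For the lower bound, I would apply Corollary \ref{thm:inequality} with the product subgroup $H=\bigoplus_{c\in Q\cap E(n)}\langle c\rangle$. Two points need verification: first, that the subgroup of $N_{\Z,Q}$ generated by the basic commutators in $Q\cap E(n)$ is indeed (isomorphic to) the direct sum $\bigoplus\langle c\rangle\cong(\Z/2\Z)^{\rho_Q(n)}$ — this follows because distinct basic commutators are part of a basis of the graded pieces and all elements have order $2$, so they generate an elementary abelian $2$-group of rank equal to their number, and the closure condition on $Q$ guarantees $N_{\Z,Q}$ is a well-defined quotient in which these remain independent; second, that each such $c$ has word length $l_S(c)\le C(\mathfrak{c})\,n$ in the generating set $\{b_0,z_0\}$. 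The length bound is the routine part: a weight-$d$ basic commutator on letters among $b_{-n},\dots,b_n$ is an iterated commutator of $d\le\mathfrak{c}$ generators $b_i=z_0^{i}b_0z_0^{-i}$, each of length $\le 2n+1$, and a $d$-fold commutator of elements of length $\ell$ has length $\le 2^d\ell$ or so; thus $l_S(c)\le C(\mathfrak{c})n$. Then Corollary \ref{thm:inequality} with $k=1$ and this window of radius $C(\mathfrak{c})n$ gives $\fol_{G_Q}(C(\mathfrak{c})n)\ge 2^{C'\rho_Q(n)}$, and rescaling $n$ yields $\fol_{G_Q}(n)\ge\exp(\rho_Q(n)/C)$ as claimed, the constant $C$ absorbing the dependence on $\mathfrak{c}$.

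The main obstacle is the claim that $Q\cap E(n)$ generates an honest direct sum of cyclic groups inside $N_{\Z,Q}$ of the full rank $\rho_Q(n)$ — i.e.\ that imposing relations to kill basic commutators outside $Q^{\Z}$, plus the relations $g^2=e$, does not secretly introduce relations among the surviving basic commutators. This is where the structural hypotheses on $Q$ (closure under the shift and under taking sub-commutators) and the basis theorem do the work: because $N_{\Z,Q}$ is by construction the quotient of $N_{\Z,\mathfrak c}$ obtained by setting a sub-basis to zero, the images of the remaining basic commutators still form bases of the graded pieces $(N_{\Z,Q})_d/(N_{\Z,Q})_{d+1}$ (stated explicitly in the text just before the corollary), so no unexpected collapse occurs, and the subgroup they generate modulo higher weight is free abelian of the right rank over $\Z/2\Z$. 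Once this is granted, everything else is a transcription of the step-$2$ argument; I would write the proof as ``The proof is entirely analogous to that of Corollary \ref{cor:foelnerGDkj}, using $\Omega_Q(n)$ in place of $\Omega_D(n)$ and the product subgroup generated by $Q\cap E(n)$; the length bound $l_S(c)\le C(\mathfrak{c})n$ for $c\in E(n)$ follows since $c$ is an at most $\mathfrak c$-fold iterated commutator in the generators $b_i=z_0^ib_0z_0^{-i}$, $|i|\le n$.''
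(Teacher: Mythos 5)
Your upper bound is fine and matches the paper's: the test sets $\Omega_Q(n)$ of normal-form elements supported in $[-n,n]$ give $\#\partial\Omega_Q(n)/\#\Omega_Q(n)=2/(n+1)$ and the cardinality count yields $\fol_{G_Q}(n)\le Cn\exp(Cn+\rho_Q(n))$, with the linear term absorbed into the constant (or into $\rho_Q$ when $Q$ is nonempty).

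The lower bound, however, has a genuine gap. You feed $H=\bigoplus_{c\in Q\cap E(n)}\langle c\rangle$ into Corollary~\ref{thm:inequality}, which requires $H$ to be a subgroup of $G_Q$ that is \emph{literally} a direct sum of the cyclic groups $\langle c\rangle$. But basic commutators in $Q\cap E(n)$ need not commute in $N_{\Z,Q}$: if $u,v$ are basic commutators of weights $d_1,d_2$ with $d_1+d_2\le\mathfrak{c}$, then $[u,v]$ lives in weight $d_1+d_2$ and can be a nontrivial element of $N_{\Z,Q}$ (for instance a surviving weight-$(d_1+d_2)$ basic commutator in $Q$). The basis theorem only tells you the images in the graded quotients $(N_{\Z,Q})_d/(N_{\Z,Q})_{d+1}$ are independent; you yourself write "modulo higher weight is free abelian," which is exactly the point --- and there is no single quotient in which all of $E(n)=\cup_{d=2}^{\mathfrak c}E_d(n)$ simultaneously becomes central. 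So the subgroup generated by $Q\cap E(n)$ is in general a nonabelian nilpotent group, not the elementary abelian $2$-group you need.

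The paper's fix, which you should incorporate, is a pigeonhole over weights: choose $d=d(n)$ with $\left|Q\cap E_d(n)\right|\ge\rho_Q(n)/\mathfrak{c}$, then project $G_Q$ to its step-$d$ quotient. In that quotient the weight-$d$ basic commutators become central, hence pairwise commute, and by the basis theorem and the exponent-$2$ relation the elements of $Q\cap E_d(n)$ generate an honest direct sum $(\Z/2\Z)^{|Q\cap E_d(n)|}$; each such commutator has length $\le 2^{d+2}n\le 2^{\mathfrak{c}+2}n$. Applying Corollary~\ref{thm:inequality} (with $k=1$) to this quotient and using that the F{\o}lner function is monotone under quotient maps gives $\fol_{G_Q}(n)\ge 2^{C\rho_Q(n)/\mathfrak{c}}$, which is the claimed lower bound after absorbing $\mathfrak{c}$ into the constant. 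With this adjustment the rest of your argument is a correct transcription of the step-$2$ case.
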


\begin{proof}

We show the lower bounds first. Given $n$, let $d=d(n)\in\{1,\ldots,\mathfrak{c}\}$ be an index
such that $\left|E_{d}(n)\cap Q\right|\ge\frac{1}{\mathfrak{c}}\rho(n)$.
Consider the projection of $G_{Q}$ to step $d$ quotient, so that
now $Q^{\mathbb{Z}}\cap E_{d}$ is in the center of the
nilpotent group. Observe that each basic commutator 
$Q^{\mathbb{Z}}\cap E_d(n)$
is within distance $2^{d+2}n$
to identity element of $G_{Q}$. The number of such non-trivial basis
element is $|E_{d}(n)\cap Q|$. We can now apply Corollary
\ref{thm:inequality} to obtain the lower bound on $\fol_{G_{Q}}(n)$
in the same way as in Corollary \ref{cor:foelnerGDkj}.

The upper bound on the F{\o}lner functions follow from sets $\Omega_{n}$
which is defined as all group elements that can be written as $(f,z)$
where $z\in[-n,n]$ and $f$ is a product of basic commutators only
involving generators $b_{j}$ with $j\in[-n,n]$. 

\end{proof}

\subsection{A construction similar to $G_{D,2,\bf{k}}$ and proof of Theorem \ref{prescribe}}\label{pres}

Now similar to the case when $\Z$ acts on a step-$2$ nilpotent group, we can consider the case that on a nilpotent group of step $\mathfrak{c}$, where $\mathfrak{c}\ge 2$, and impose relations such that the $\mathbb{Z}$-orbits of commutators are periodic.
Note that introducing such relations on commutators has consequences on the relations of higher commutators 
where they appear. Here we will consider a specific way of assigning periodicity, 
which is sufficient to prove Theorem \ref{prescribe}
on nilpotent-by-cyclic groups with prescribed F{\o}lner function.

\begin{proof}[Proof of Theorem \ref{prescribe}]

If $\tau$ is bounded, then the lamplighter group $\mathbb{Z}\wr(\mathbb{Z}/2\mathbb{Z})$
satisfies the inequality in the statement. In what follows we assume
that $\tau$ is unbounded. 

Define a sequence of indices $(k_{i})$ recursively as 
\begin{equation}{ki}\label{ki}
k_{i+1}=\min\{n>k_{i}:\ \tau(2^{n})\ge2\tau(2^{k_{i}})\}.
\end{equation}
By assumption on $\tau$, we have that $\tau\left(2^{k_{i}}\right)\le\left(2^{k_{i}}\right)^{\mathfrak{c}}.$
For each $i$, let $\mathfrak{c}_i$ be the integer such that 
$$\left(2^{k_{i}}\right)^{\mathfrak{c}_i-1}<\tau\left(2^{k_{i}}\right)\le\left(2^{k_{i}}\right)^{\mathfrak{c}_i}.$$
For each $k_{i}$, we consider the group $N_{i}$ which is a quotient group of $N_{\Z,\mathfrak{c}_i}$. The group $N_i$ is generated
by $\{b_{j}, j\in\Z\}$, with relations 
\[
b_{i}^2=1,\ b_{i}=b_{j}\ \mbox{if }i\equiv j\mod2^{k_{i}},
\]
and relations imposing that a commutator on a string $\left(b_{i_{1}},b_{i_{2}},\ldots,b_{i_{\ell}}\right)$
is trivial whenever 
$$\min_{1\le j,k\le\ell}d(\bar{i}_{j},\bar{i}_{k})<2^{k_{i}-\mathfrak{c}_i},$$
where $d$ denotes the distance on the cycle $\mathbb{Z}/2^{k_{i}}\mathbb{Z}$.
Note that the finite cyclic group $\mathbb{Z}/2^{k_{i}}\mathbb{Z}$ acts on $N_i$ by shift. 
Let $G_{i}$ be the extension of $N_{i}$ by $\mathbb{Z}/2^{k_{i}}\mathbb{Z}$.
By construction, the $\Z/2\Z$ rank of the center of $N_i$ satisfies
$$\frac{1}{C}(2^{k_i})^{\mathfrak{c}_i}\le{\rm{Rank}}_{\Z/2\Z}({\rm{Center}}(N_i))\le C (2^{k_i})^{\mathfrak{c}_i}.$$
Under the action of $\Z$, the elements of the center of $N_i$ are divided into $\Z/2^{k_i}\Z$-orbits and each orbit has size $2^{k_i}$.

Further, we choose to keep 
$\lfloor\tau(2^{k_{i}})/2^{k_i}\rfloor$ of distinct $\Z/2^{k_i}\Z$-orbits under the action of $\Z/2^{k_i}Z$ in the center of $N_i$. The other orbits in the center of $N_i$ under action of $\Z/2^{k_i}\Z$ are set to be equal to identity element.
Since this operation is performed in the center, it doesn't affect the lower levels. We denote by $\bar{N}_i$
this quotient group of $N_i$. By construction, the $\Z/2\Z$-rank of the center of $\bar{N}_i$ is comparable to $\tau(2^{k_i})$.

We denote by 
$\bar{G}_{i}$ the extension of $\bar{N}_i$ by $\Z/2^{k_i}\Z$ . It is a quotient group of $\Gamma=N_{\Z,\mathfrak{c}}\rtimes\Z$. Let $\pi_i$ be the quotient map
\[
\pi_{i}:\Gamma\to\bar{G}_{i}.
\]
We take the group $G$ in the claim of the theorem to be 
\[
G=\Gamma/\cap_{i=1}^{\infty}\ker\left(\pi_{i}:\Gamma\to\bar{G}_{i}\right).
\]
That is, $G$ is the smallest group that projects onto each $\bar{G}_i$ marked with generating set $\{b_0,t\}.$\\

We now verify that the F{\o}lner function of $G$ satisfies the estimate as stated. 
In order to prove the lower bound for the F{\o}lner function, for $n\in[2^{k_i},2^{k_{i+1}}]$, consider the quotient group $\bar{G}_i$, 
then we have
$$\fol_{G,S}(n)\ge \fol_{\bar{G}_i,S}(2^{k_i}).$$
By construction of the group $\bar{G}_i$, we have $\tau(2^{k_i})$ central nodes within distance $C2^{k_i}$.
Therefore, by Corollary \ref{thm:inequality} we have 
$$\fol_{\bar{G}_i,S}(C2^{k_i})\ge \exp(\tau(2^{k_i})/C).$$
By definition of $k_{i+1}$ we have that $\tau(n/2)<2\tau(2^{k_i})$, thus we have proved the lower bound on $\fol_{G,S}(n)$.

In order to show the upper bound of the F{\o}lner function, note that for $n\in[2^{k_i},2^{k_{i+1}}]$, the ball of radius $n$ around identity in $\bar{G}_j$ with $2^{k_j-\mathfrak{c}}>n$ are 
the same as the ball of same radius in $\Z\wr(\Z/2\Z)$. Then take the test set to be those elements which have support contained in $[-n,n]$. We have that 
the volume of this set is bounded from above by
$$2n2^{2n}\prod_{j: 2^{k_j-\mathfrak{c}}<n}\#\bar{G}_j\le 2n2^{2n}\prod_{j: 2^{k_j-\mathfrak{c}}<n}\exp(C\tau(2^{k_j}))\le \exp\left(Cn+C\tau(Cn)\right).$$
In the last inequality, we used the fact that by choice of $k_j$, $\tau(2^{k_{j-1}})\le \frac{1}{2}\tau(2^{k_j})$, thus the summation in the exponential function
is bounded by a geometric sum. This completes the proof of the first claim of the theorem. \\

Next we show the second claim of the theorem. For a sequence of increasing integers $(m_i)$, 
and a given prescribed function $\tau$, set 
$$\tau_1(n)=\tau(n)\ \mbox{for }n\in[m_{2j-1},m_{2j}]\mbox{ and }\tau_1(n)=\tau(m_{2j})\mbox{ for } n\in[m_{2j},m_{2j+1});$$
and 
$$\tau_2(n)=\tau(m_{2j-1})\ \mbox{for }n\in(m_{2j-1},m_{2j})\mbox{ and }\tau_2(n)=\tau(n)\mbox{ for } n\in[m_{2j},m_{2j+1}].$$
Then both functions satisfy the assumption that $0\le\tau_i(n)\le n^{\mathfrak{c}}$, $i=1,2$. 
Then as in the proof above, there is a nilpotent-by-cyclic group $G_i$ such that $\log\fol_{G_i}$ is equivalent to $n+\tau_i(n)$. 
It follows that for the direct product $\Gamma=G_1\times G_2$, $\log\fol_{\Gamma}(n)$ is equivalent to 
$n+\max\{\tau_1(n),\tau_2(n)\}=n+\tau(n)$.

It remains to verify that for sufficiently fast growing sequence $(m_i)$, the random walk on each $G_i$, $i=1,2$, is cautious.
Since $\tau_1$ is constant on $[m_{2j},m_{2j+1})$, it follows from the construction that there is index $i(j)$ such that 
$$2^{k_{i(j)}}\le m_{2j}\mbox{ and }\ 2^{k_{i(j)+1}}\ge m_{2j+1}.$$
It implies that in $G_1$, the ball of radius $cm_{2j+1}$ is the same as in 
$$\tilde{G}_{i(j)}=\Gamma/\left(\cap_{1\le k\le i(j)}\ker(\Gamma\to\bar{G}_k)\cap\ker(\pi_0:\Gamma\to \Z\wr(\Z/2\Z))\right).$$
The group $\tilde{G}_{i(j)}$ fits into the exact sequence
$$1\to N_{i(j)}\to\tilde{G}_{i(j)}\to \Z\wr(\Z/2\Z),$$
where $N_{i(j)}$ is a finite nilpotent group depending only on value of $\tau_1$ on $[0,m_{2j}]$. 
Take $m_{2j+1}$ to be sufficiently large such that 
$$m_{2j+1}\gg {\rm{Diam}}_{\bar{G}_{i(j)},S}(N_{i(j)}).$$
Then we have that for simple random walk on $G_1$, $k\le cm_{2j+1}$, 
$$|W_k|\le {\rm{Diam}}_{\bar{G}_{i(j)},S}(N_{i(j)})+|\bar{W}_k|,$$
where $\overline{W}_k$ is the projection of the random walk to $\Z\wr(\Z/2\Z)$.
It follows that for $cm_{2j+1}>t\gg{\rm{Diam}}_{\bar{G}_{i(j)},S}(N_{i(j)})$, for any $c'>0$
$$\mathbb{P}\left(\max_{1\le k\le t}\left|W_{k}\right|_{G_1}\le c'\sqrt{t}\right)\ge\mathbb{P}\left(\max_{1\le k\le t}\left|\overline{W}_{k}\right|_{G_1}\le \frac{c'}{2}\sqrt{t}\right)\ge\delta{c'},$$
where $\delta(c')>0$ is a constant only depending on $c'$. 
The last inequality used the fact that simple random walk on the lamplighter $\Z\wr(\Z/2\Z)$ is cautious.
The argument for the random walk on $G_2$ is the same, by choosing $m_{2j}$ to be sufficiently larger than $m_{2j-1}$. 

\end{proof}

\begin{exa}
A few examples of functions that satisfy the assumption of Theorem \ref{prescribe}:

\begin{itemize}

\item $\tau(n)=n^{\alpha}$, for some $\alpha\in [1,\mathfrak{c}]$.

\item The function $\tau$ admitting an increasing sequence of integers $(n_j)$ and $\alpha,\beta\in (0,\mathfrak{c}]$ such that 
$$\tau(n)=n^{\alpha}\mbox{ for } n\in [n_{2j-1},n_{2j}]\mbox{ and } \tau(n)=n^{\beta}\mbox{ for } n\in [n_{2j},n_{2j+1}].$$
Note that if $\alpha\le 1$, then $n+n^{\alpha}$ is equivalent to $n$. 

\item The function $\tau$ admitting an increasing sequence of integers $(n_j)$ and $\alpha\in (1,\mathfrak{c}]$ such that 
$$\tau(n)=n^{\alpha}\mbox{ for } n\in [n_{2j-1},n_{2j}]$$
and $\tau$ is constant on $[n_{2j},n_{2j+1}]$.

\end{itemize}

\end{exa}

\subsection{Corollaries on return probability and drift}\label{osc}

By the general relation between isoperimetry and decay of return probability
(see e.g. \cite[Theorem 14.3]{woessbook}),
a lower bound on F{\o}lner function implies an upper bound on the return probability 
$\mu^{{n}}(e)$. In particular, if there is some $\delta>0$ such that
the F{\o}lner function of $G$ satisfies $\fol_G(n)\succeq \exp(n^{2+\delta})$ over a sequence of sufficiently long sub-intervals, then there exists a time subsequence $(n_i)$
such that 
$$\mu^{(2n_i)}(e)\preceq\exp\left(-n_i^{\frac{2+\delta}{4+\delta}}\right).$$ 

Recall the drift function of a random walk with step distribution $\mu$ is defined as 
$$L_{\mu}(n)=\sum_{x\in G}d_S(e,x)\mu^{(n)}(x).$$

\begin{rem}\label{non-cautious}
Although in general it is not sharp, one can use the fact that return probability 
$\mu^{(2n)}$ attains its maximum at identity $e$ to give the following bounds:
$$\sum_{g\in B(e,r)}\mu^{(2n)}(g)\le\sum_{g\in B(e,r)}\mu^{(2n)}(e)=v_{G,S}(r)\mu^{(2n)}(e).$$
Since the entropy satisfies $H_{\mu}(2n)\ge -\log\mu^{(2n)}(e)$,
we have 
$$L_{\mu}(2n)\ge cH_{\mu}(2n)\ge -c\log\mu^{(2n)}(e).$$
It follows that in this case if there is some $\delta>0$ such that
the F{\o}lner function of $G$ satisfies $\fol_G(n)\succeq \exp(n^{2+\delta})$ over a sequence of sufficiently long sub-intervals,
the $\mu$-random walk is neither cautious nor diffusive. 
\end{rem}

We now show that groups considered in the previous section with appropriate chosen parameters provide examples with oscillating drift function as stated in Corollary \ref{driftosc} in the Introduction.

\begin{proof}[Proof of Corollary \ref{driftosc}]
Given $\beta$, let $\alpha$ be an exponent such that $\frac{\alpha}{2+\alpha}>\beta$. Take a rapidly growing sequence $(m_i)$, and set
$$\tau(n)=n^{\alpha}\mbox{ for }n\in[m_{2j},m_{2j+1}]\mbox{ and }\tau(n)=m_{2j+1}^{\alpha}\mbox{ for }n\in[m_{2j+1},m_{2j+2}).$$
By Theorem \ref{prescribe}, there is a nilpotent-by-cyclic group $G$ such that 
$$\frac{1}{C}(n+\tau(n))\le\log\fol_{G}(n)\le Cn+C\tau(n).$$
Recall that in the proof of Theorem \ref{prescribe}, the group $G$ is defined as the smallest group that projects onto a sequence $\bar{G}_i$ marked with generating set
 $\{b_0,t\}$, where by construction the Cayley graph of $\bar{G}_i$ coincide 
 with the Cayley graph of the lamplighter $\Z\wr (\Z/2\Z)$ in the ball of radius
 $2^{k_i}$ around identity. The sequence $(k_i)$ is determined from the prescribed function $\tau$ as in (\ref{ki}).

From the definition of the function $\tau$ and the sequence $(k_i)$ as in (\ref{ki}), we have that there exists a subsequence $(p_j)$ such that 
$$k_{p_j+1}/k_{p_j}\ge m_{2j+2}/m_{2j+1}.$$
Indeed, this is because the function $\tau$ is constant on the interval $(m_{2j+1},m_{2j+2})$.
Mark $\bar{G}_0=\Z\wr(\Z/2\Z)$ with the generating set $\{b_0,t\}$ and  write 
\[
G_j=\Gamma/\cap_{i=0}^{j}\ker\left(\pi_{i}:\Gamma\to\bar{G}_{i}\right).
\]
Then the Cayley graph of $G$ with respect to the marking $\{b_0,t\}$ and the Cayley graph of $G_j$ coincides in the balls of radius $2^{k_{j+1}}$ around the identities. 
Note that since each $\bar{G}_i$, $i\ge 1$ is finite, the group $G_j$ is a finite extension of the lamplighter group $\bar{G}_0$. Note that 
$$L_{G,\mu}(n)\le {\rm{Diam}}_{G_j,S}(\ker(G_j\to \bar{G}_0))+L_{\bar{G_0},\mu}(n)+n\mathbb{P}(|W_n|\ge 2^{k_{j+1}}).$$
When $(m_j)$ grows sufficiently fast such that $k_{j+1}$ is much larger than the diameter of the finite set $\ker(G_j\to \bar{G}_0)$ in $G_j$, we have that 
$$L_{G,\mu}(t_i)\le 2L_{\bar{G_0},\mu}(t_i)\le C\sqrt(t_i)$$
along a time subsequence $(t_i)$.

Next we show that if $(m_j)$ grows sufficiently fast, then along another subsequence $(n_i)$,  we have 
$$\mu^{n_i}(e)\le\exp\left(-cn_i^{\frac{\alpha}{2+\alpha}}\right).$$
As explained in Remark \ref{non-cautious}, this implies that the $\mu$-random walk is not cautious and $L_\mu(n_i)\ge cn_i^{\frac{\alpha}{2+\alpha}}$.

We conclude that $L_{\mu}(n)$ oscillates between $n^{\frac{\alpha}{2+\alpha}}$ and $n^{1/2}$, 
\end{proof}

\subsection{Examples of groups with property $H_{\rm{FD}}$}\label{HFD}
As we have mentioned in the Introduction, a group $G$ is said to have \textit{Shalom's property
$H_{\mathrm{FD}}$} if every orthogonal $G$-representation $\pi$ 
with non-zero reduced cohomology $\overline{H}^1(G,\pi)$ admits a finite-dimensional sub-representation. 

By a result of Gournay \cite[Theorem 4.7]{gournay}, if the quotient
of a group over its $FC$-centre has Shalom's property $H_{\mathrm{FD}}$,
then the group also has this property. 
Recall that the conjugacy class of an element $g\in G$ is the set $\{hgh^{-1}: h\in G\}$.
The FC-center of the group consists of elements with finite conjugacy classes. We say $G$ is an FC-central extension of $H$ if the kernel of the quotient map $G\to H$ is contained in the FC-center of $G$. 

\begin{lem}\label{series}
The nilpotent-by-cyclic group $G$ in Theorem \ref{prescribe} is an FC-central extension of $\Z\wr(\Z/2\Z)$.
\end{lem}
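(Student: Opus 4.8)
The plan is to unwind the construction of $G$ in Theorem \ref{prescribe} and exhibit the kernel of $G \to \Z\wr(\Z/2\Z)$ explicitly as a union of finite normal subgroups, each of which is therefore contained in the FC-center. Recall that $G = \Gamma/\bigcap_i \ker(\pi_i)$ where $\Gamma = N_{\Z,\mathfrak{c}}\rtimes\Z$, and that each $\bar G_i = \bar N_i \rtimes \Z/2^{k_i}\Z$ is itself a quotient of $\Z\wr(\Z/2\Z)$ by a finite nilpotent group $\bar N_i / (\text{lower level})$ sitting above the lamp group $\oplus \Z/2\Z$; more precisely, I would first record that there is a natural surjection $\pi_0:\Gamma \to \Z\wr(\Z/2\Z)$ obtained by killing all commutators, that $\pi_0$ factors through each $\bar G_i$, and hence through $G$, giving the quotient map $q: G\to \Z\wr(\Z/2\Z)$ whose kernel we must analyze.

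The key observation is that $\ker q$ is, by construction, the image in $G$ of the subgroup $K \le \Gamma$ generated by all basic commutators of weight $\ge 2$ (the commutator subgroup of $N_{\Z,\mathfrak{c}}$), i.e. $\ker q = K/(K\cap \bigcap_i\ker\pi_i)$. I would then show this group is locally finite as an FC-central subgroup by filtering it: for each $m$, let $K_m \le K$ be the subgroup generated by (the images of) basic commutators supported on letters $b_j$ with $|j|\le m$ and of weight $\ge 2$. In $G$, the image of $K_m$ is finite. Indeed, in $G$ the commutator relations kill everything outside finitely many $\Z/2^{k_i}\Z$-orbits in each $\bar N_i$, and for each fixed $i$ only finitely many $\bar G_i$ with $2^{k_i-\mathfrak{c}}$ small enough "see" the bounded support; for the remaining (cofinitely many) large $i$, the relations $b_\ell = b_{\ell'}$ for $\ell\equiv\ell' \bmod 2^{k_i}$ together with the triviality of short-range commutators force the image of $K_m$ into $\bar N_i$ to live in a fixed finite quotient independent of such $i$. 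Thus the image of $K_m$ embeds into a finite product of finite groups and is finite. Since every element of $\ker q$ lies in the image of some $K_m$, and each such image is a \emph{normal} (hence finite-conjugacy-class-wise bounded) subgroup of $G$ — normality following because $\Z$ acts on $K_m$ by shifting into $K_{m+|z|}$ and $b_0$-conjugation stays within bounded support modulo the finite quotient — every element of $\ker q$ has finite conjugacy class in $G$, which is exactly the statement that $G$ is an FC-central extension of $\Z\wr(\Z/2\Z)$.

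The main obstacle I anticipate is the bookkeeping in showing that the image of $K_m$ in $G$ is genuinely finite, i.e. that passing to $G = \Gamma/\bigcap\ker\pi_i$ (an inverse-limit-type quotient over infinitely many $\bar G_i$) does not produce an infinite image of a finitely-generated-but-infinite-rank subgroup of $\Gamma$. The point to nail down is a uniformity statement: there is a single finite group $F_m$ (depending on $m$ and $\mathfrak c$ but not on $i$) such that for all but finitely many $i$ the composite $K_m \hookrightarrow \Gamma \xrightarrow{\pi_i} \bar G_i$ has image contained in an isomorphic copy of a quotient of $F_m$; this is where the specific choice of periodicity ($b_\ell=b_{\ell'}$ for $\ell\equiv\ell'\bmod 2^{k_i}$ and triviality of commutators at cycle-distance $<2^{k_i-\mathfrak{c}_i}$, with $k_i\to\infty$) is used. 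Once that uniformity is in hand, $K_m$ maps into a finite sub-product of $\prod_i \bar G_i$ and the finiteness of its image in $G$ is immediate, and the rest of the argument is the routine normality/FC-center verification sketched above.
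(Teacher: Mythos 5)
Your overall approach matches the paper's: identify $\ker\bigl(G\to\Z\wr(\Z/2\Z)\bigr)$ with the image of the commutator subgroup of $N_{\Z,\mathfrak{c}}$, and use the uniformity built into the construction (the triviality of commutators at cycle-distance $<2^{k_i-\mathfrak{c}_i}$, with $k_i-\mathfrak{c}_i\to\infty$ since $\mathfrak{c}_i\le\mathfrak{c}$) to see that each fixed formal commutator maps to the identity in $\bar G_i$ for all but finitely many $i$. The ``bookkeeping obstacle'' you flag at the end is handled in the paper exactly as you anticipate.

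The one genuine problem is your normality claim for the image of $K_m$ in $G$. Conjugation by the generator $t$ of $\Z$ sends $K_m$ to the subgroup generated by commutators supported in $[-m+1,m+1]$, which is \emph{not} $K_m$; your own phrase ``shifting into $K_{m+|z|}$'' already says as much, and the appeal to ``bounded support modulo the finite quotient'' does not establish that the image of $K_m$ in $G$ is closed under conjugation. Fortunately the normality of $\bar K_m$ is not needed. What you should use instead (and what the paper uses) is that the kernel of $q$ itself is normal, so the conjugacy class of an element $u\in\bar K_m$ consists of elements of $\ker q$; and since $u$ maps to $e$ in $\bar G_i$ for all $i>I$ (some finite $I$ depending on $m$), so does every conjugate $gug^{-1}$ (apply $\pi_i$ to the conjugation). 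Thus the whole conjugacy class of $u$ lies in the subgroup $\{h\in G:\pi_i(h)=e \mbox{ for all } i>I\}$, which embeds (via the embedding $G\hookrightarrow\prod_i\bar G_i$) into the finite group $\prod_{i\le I}\bar G_i$. Finiteness of the conjugacy class, hence $u$ lying in the FC-center, follows immediately without any normality claim for $\bar K_m$.
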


\begin{proof}
A formal commutator $u$ on a string $\left(b_{i_{1}},b_{i_{2}},\ldots,b_{i_{\ell}}\right)$ is nontrivial in only finitely many nilpotent groups
$N_i$ in the construction. Indeed,  to be nontrivial in $N_i$, we need $\min_{1\le j,k\le\ell}d(\bar{i}_{j},\bar{i}_{k})\ge 2^{k_{i}-\mathfrak{c}_i}$ on the cycle $\Z/2^{k_i}\Z$.
Since $G=\Gamma/\cap_{i=1}^{\infty}\ker\left(\pi_{i}:\Gamma\to\bar{G}_{i}\right)$, we have that the conjugacy class of image of $u$ in $G$ 
is contained in a subgroup of the direct product
of finitely many $N_i$'s. Since each $N_i$ is a finite nilpotent group, it follows that the commutator $u$ is in the FC-center of $G$. We conclude that 
$G$ is an FC-central extension of $\Z\wr(\Z/2\Z)$.

\end{proof}
Therefore by applying Gournay's result and Lemma \ref{series},
for the groups constructed in Theorem \ref{prescribe}, 
we have that 
they have property $H_{\mathrm{FD}}$. 
Alternatively, by a generalization of Gournay's result, 
\cite[Proposition 4.7]{brieusselzheng} implies that the group $N_{\rm{Nil},\Z,\mathfrak{c}}\rtimes\Z$ has property $H_{\rm{FD}}$. 
Therefore its quotient groups considered in the previous subsection have property $H_{\rm{FD}}$ as well.

Recall that by \cite[Corollary 2.5]{erschlerozawa}, if $G$ admits a cautious random walk $\mu$ with finite generating support,  then $G$ has Property $H_{\rm{FD}}$. A sufficient condition for a symmetric $\mu$-random walk to be cautious is that the $\ell^2$-isoperimetry inside balls satisfies the upper bound 
in the following lemma. Recall that 
$$\lambda_{\mu}(B(e,r))=\inf\left\{\frac{1}{2}\sum_{x,y\in G}(f(x)-f(xy))^2\mu(y):\ \mathrm{supp}f\subseteq B(e,r): \left\Vert f\right\Vert_2=1\right\}.$$
Note that by Coulhon-Saloff-Coste isoperimetric inequality \cite{CSC}, it always admits a lower bound that for some constant $c=c(\mu)>0$, for all $r\ge 1$,
$$\lambda_{\mu}(B(e,r))\ge cr^{-2}.$$
If the opposite inequality holds along a subsequence of balls $B(e,r_i)$, then the random walk is cautious:

\begin{lem}\label{pairs}
Suppose there exists a constant $C>0$
and a sequence of balls $B(e,r_i)$ with $r_i\to \infty$ as $i\to\infty$, 
such that 
\begin{equation}
\lambda_{\mu}(B(e,r_i))\le Cr_i^{-2},\label{eq: iso ball}
\end{equation}
Then the $\mu$-random walk is cautious in the sense of (\ref{eq: cautious}).
\end{lem}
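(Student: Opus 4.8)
The plan is to build an explicit trial function supported on the ball $B(e,r_i)$ that nearly realizes the spectral bound \eqref{eq: iso ball}, and then feed it into a standard comparison between the spectral gap of the ball and the probability that the random walk stays confined to the ball for time $\simeq r_i^2$. First I would take, for each $i$, a function $f_i$ with $\mathrm{supp}\,f_i\subseteq B(e,r_i)$, $\|f_i\|_2=1$, and Rayleigh quotient at most $2Cr_i^{-2}$; such an $f_i$ exists by \eqref{eq: iso ball}. The key point is that $f_i$ is an approximate eigenfunction of the Markov operator $P$ (averaging by $\mu$) restricted to $B(e,r_i)$ with Dirichlet boundary conditions, with eigenvalue close to $1$, so the heat flow with Dirichlet condition on the complement of the ball decays slowly: one has $\langle P_{B(e,r_i)}^n f_i, f_i\rangle \ge (1-2Cr_i^{-2})^n \ge e^{-4Cn/r_i^2}$ for $r_i$ large, where $P_{B(e,r_i)}$ is the sub-Markovian operator killed outside the ball. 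Taking $n = \lfloor \epsilon r_i^2\rfloor$ for a fixed small $\epsilon$ gives a lower bound bounded away from zero, uniformly in $i$.

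Next I would translate this into a statement about the confined random walk. Writing $\mathbb{P}_x$ for the walk started at $x$ and $\tau$ for the exit time from $B(e,r_i)$, the quantity $\langle P_{B(e,r_i)}^n f_i, f_i\rangle = \sum_{x,y} f_i(x)\,\mathbb{P}_x(W_n=y,\ \tau>n)\,f_i(y)$ is controlled by Cauchy–Schwarz and $\|f_i\|_2=1$, so a lower bound here forces $\sum_x f_i(x)^2 \,\mathbb{P}_x(\tau > n)$ to be bounded below; since $\sum_x f_i(x)^2 = 1$, there must be a set of $x$ (of $f_i^2$-measure bounded away from zero) from which the walk stays in $B(e,r_i)$ up to time $n$ with probability bounded away from zero. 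To pass from "started at some good $x$" to "started at $e$", I would use a bounded-overlap / Markov-property argument: the walk started at $e$ reaches any fixed such $x$ in $\le |x|_G \le r_i$ steps with probability at least $c^{r_i}$, which is of the wrong order, so instead I would center the trial function using a more careful choice — either take $f_i$ to be, after translation, essentially symmetric about $e$ (replace $f_i$ by a suitable average of its translates, or work with $g_i(x)=(f_i \ast \tilde f_i)(x)/\|f_i\ast\tilde f_i\|$ which is symmetric and still has good Rayleigh quotient on a ball of radius $2r_i$), so that the "good starting points" include $e$ itself with $f_i(e)^2$ bounded below. Then $\mathbb{P}_e(\max_{1\le k\le n}|W_k|_G \le r_i) = \mathbb{P}_e(\tau > n) \ge \delta > 0$ for $n=\lfloor\epsilon r_i^2\rfloor$, uniformly in $i$.

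Finally I would match this to the definition \eqref{eq: cautious}: given any $c>0$, choose $\epsilon$ with $\epsilon^{-1/2}\le c$, so that $r_i \le c\sqrt{n}$ when $n = \lfloor \epsilon r_i^2\rfloor$; then $\mathbb{P}(\max_{1\le k\le n}|W_k|_G \le c\sqrt{n}) \ge \mathbb{P}(\max_{1\le k\le n}|W_k|_G \le r_i) \ge \delta$, and letting $i\to\infty$ (so $n\to\infty$ along this subsequence) yields $\limsup_n \mathbb{P}(\max_{1\le k\le n}|W_k|_G\le c\sqrt n) \ge \delta > 0$, which is exactly cautiousness. I expect the main obstacle to be the centering step: the raw trial function from \eqref{eq: iso ball} need not put any mass near $e$, and one must argue — via the symmetrization $g_i = f_i\ast\tilde f_i$ or by directly choosing a test function that is a localized version of the heat kernel at $e$ — that one can take a trial function whose $\ell^2$-mass at $e$ is bounded below while only degrading the Rayleigh quotient and the radius by bounded factors. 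Once that is in place, the rest is the classical spectral-gap / survival-time comparison (cf.\ the Coulhon–Saloff-Coste circle of ideas, used already in Lemma \ref{lem:generalizedCSC}), carried out on the ball rather than on the whole group.
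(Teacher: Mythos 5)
Your high-level plan (small Dirichlet eigenvalue on the ball $\Rightarrow$ the killed semigroup decays slowly over $\simeq r^2$ steps $\Rightarrow$ the walk stays confined with probability bounded below $\Rightarrow$ cautiousness after tuning $\epsilon$ against $c$) is the right one, and you correctly flagged the centering problem as the crux. But two steps are genuinely incomplete, and the paper's proof sidesteps both.

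First, the estimate $\langle P^n_{B(e,r_i)}f_i,f_i\rangle\ge(1-2Cr_i^{-2})^n$ is not automatic from the Rayleigh quotient alone: in the eigenbasis $\langle P^n_D f,f\rangle=\sum_j(1-\lambda_j)^n|\langle f,\varphi_j\rangle|^2$, and a small Rayleigh quotient does not by itself give you a lower bound on $|\langle f,\varphi_1\rangle|$; you'd need an extra Markov-type argument (restrict to even $n$, throw away eigenvalues $>2\lambda_1$, use that at least half the mass is below that threshold) to recover a bound of the same shape. Second, and more seriously, the symmetrization $g_i=f_i\ast\tilde f_i$ is offered as the fix for centering, but it is not clear that $g_i$ both has a Rayleigh quotient on $B(e,2r_i)$ comparable to that of $f_i$ and has $g_i(e)^2/\|g_i\|_2^2$ bounded below; the first point is already nontrivial (the Dirichlet form does not behave simply under convolution), and the second requires control of $\|f_i\ast\tilde f_i\|_2$ that you don't have. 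As written the fix is speculative.

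The paper's proof avoids both issues. It works directly with the \emph{principal} Dirichlet eigenfunction $\varphi_1$, which is nonnegative by Perron--Frobenius, picks $x_0$ to be a maximizer of $\varphi_1$, and uses
\[
\sum_{y}P^D_n(x_0,y)\,\varphi_1(x_0)\ \ge\ \sum_{y}P^D_n(x_0,y)\,\varphi_1(y)\ =\ (1-\lambda_1)^n\varphi_1(x_0)
\]
to get the clean bound $\mathbb{P}_{x_0}(\tau_{B(e,r)}>n)\ge(1-\lambda_{\mu}(B(e,r)))^n$ with no loss of constants and no issue of eigenvalue sign (positivity of $P^D_n(x_0,\cdot)$ and of $\varphi_1$ handle everything). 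It then transfers from $x_0$ to $e$ not by reshaping the test function but by group invariance of the walk: if the walk started at $x_0\in B(e,r)$ stays in $B(e,r)$ for $n$ steps, then by the triangle inequality the increments themselves have all partial products in $B(e,2r)$, so
\[
\mathbb{P}_{e}\Bigl(\max_{k\le n}|W_k|\le 2r\Bigr)\ \ge\ \max_{x\in B(e,r)}\mathbb{P}_{x}\bigl(W_k\in B(e,r)\ \forall k\le n\bigr).
\]
This costs only a factor $2$ in the radius and completely eliminates the centering difficulty. The final step (choosing $n_i=(r_i/c)^2$) matches yours. In short: same strategy, but the paper's use of the Perron eigenfunction plus the translation-invariance trick is both simpler and actually closes the two gaps your argument leaves open.
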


\begin{proof}
Let $(W_k)$ be the $\mu$-random walk trajectory. We first show that 
$$\max_{x\in B(e,r)}\mathbb{P}_x\left(W_k\in B(e,r)\mbox{ for all }k\le n\right)\ge (1-\lambda_{\mu}(e,r))^{n}.$$
This inequality is a known bound which can be obtained from the eigenbasis expansion of the transition probability with Dirichlet boundary
on the ball $B(e,r)$. We provide a proof for the convenience of the reader.
Let $P_n^D$ be the semigroup with Dirichlet boundary on the ball $B(e,r)$.
We recall that by definition
$$P_n^Df(x)=\mathbb{E}_{x}[f(W_n)\mathbf{1}_{\{n<\tau\}}],$$
where $f$ has zero boundary condition on $B(e,r)$, $W_n$ is the random walk with step distribution $\mu$ and $\tau$ is the stopping time that the random walk first exits $B(e,r)$. 
Let $\lambda_1<\cdots\le \lambda_v$ be the eigenvalues of
 $I-\mu$ with Dirichlet boundary on the ball, and $\varphi_1,\cdots,\varphi_v$ be the corresponding eigenfunctions 
 normalized in such a way that $\|\varphi_i\|_2=1$.
 Note that $\lambda_1=\lambda_{\mu}(B(e,r))$ and $\varphi_1$ is non-negative. 
 Then
 $$P_n^D(x,y)=\sum_{i=1}^v(1-\lambda_i)^n\varphi_i(x)\varphi_i(y).$$
 Let $x_0$ be a point in $B(e,r)$ such that $\varphi_1$ achieves its maximum at $x_0$. 
Then
 \begin{align*}
 \sum_{y\in B(e,r)}P_n^D(x_0,y)\varphi_{1}(x_0)\ge  \sum_{y\in B(e,r)}P_n^D(x_0,y)\varphi_{1}(y)\\
=(1-\lambda_1)^n\varphi_{1}(x_0)\left(\sum_{y\in B(e,r)}\varphi_1(y)^2\right)
=(1-\lambda_1)^n\varphi_{1}(x_0).
\end{align*} 
Therefore
 $$\mathbb{P}_{x_0}\left(W_k\in B(e,r)\mbox{ for all }k\le n\right)= \sum_{y\in B(e,r)}P_n^D(x_0,y)\ge (1-\lambda_{\mu}(e,r))^{n}.$$
 
 To see that the assumed bound (\ref{eq: iso ball}) imply cautiousness, note that 
 $$ \mathbb{P}_{e}\left(\max_{k\le n}|W_k|\le 2r\right)\ge \max_{x\in B(e,r)}\mathbb{P}_{x}\left(W_k\in B(e,r)\mbox{ for all }k\le n\right).$$
Given any constant $c>0$, take the time subsequence 
 $n_i=(r_i/c)^2$, we have 
$$
 \mathbb{P}_{e}\left(\max_{k\le n_i}|W_k|\le 2c\sqrt{n_i}\right)\ge (1-Cr_i^{-2})^{r_{i}^2/c^2}\ge \delta(c,C)>0.
$$
\end{proof}

Tessera \cite{tessera} defined a class of groups $(\mathcal{L})$ which includes polycyclic groups, solvable Baumslag-Solitar groups and wreath products $\mathbb{Z}\wr F$
with $F$ finite, and showed that a group from this class admits a full sequence of controlled F{\o}lner pairs.
Moreover, in \cite{tessera2}, he proved this property holds for quotient of any solvable algebraic group over a $q$-adic field where $q$ is a prime. 
Shalom \cite{shalom} proved that polycyclic groups have property $H_{\rm{FD}}$, 
using a theorem of Delorme
\cite{delorme} concerning the cohomology of irreducible unitary representation of connected Lie groups.
Controlled F{\o}lner pairs in polycyclic groups provide another argument to establish property $H_{\mathrm{FD}}$ for these groups, which does not use Delorme's 
result.

Having a controlled F{\o}lner pair $F'_{i}\subset F_{i}$ as above implies (see \cite[Proposition 4.9]{tessera})
$$\lambda_{\mu}(B(e,Cn_i))\le Cn_i^{-2}$$
for any finitely supported symmetric probability measure $\mu$.

\begin{cor}\label{pairfd}
Suppose a finitely generated group admits a subsequence of controlled F{\o}lner pairs, 
then it has property $H_{\mathrm{FD}}$. 
\end{cor}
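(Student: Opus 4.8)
The plan is to chain together the results already assembled in this subsection. Concretely, suppose $G$ is finitely generated and admits a subsequence of controlled F{\o}lner pairs $(F'_{n_i},F_{n_i})$, and fix any symmetric probability measure $\mu$ on $G$ with finite generating support. First I would invoke Tessera's estimate \cite[Proposition 4.9]{tessera}, recalled just above, which turns the controlled F{\o}lner pair at scale $n_i$ into the $\ell^2$-isoperimetric bound $\lambda_{\mu}(B(e,Cn_i))\le Cn_i^{-2}$; reindexing the radii as $r_i=Cn_i$ (which still tend to infinity), this is exactly hypothesis (\ref{eq: iso ball}) of Lemma \ref{pairs}. Applying Lemma \ref{pairs} then gives that the $\mu$-random walk is cautious in the sense of (\ref{eq: cautious}): for every $c>0$, $\limsup_n \mathbb{P}\big(\max_{1\le k\le n}|W_k|_G\le c\sqrt{n}\big)>0$.

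Second, I would feed this into \cite[Corollary 2.5]{erschlerozawa}, which states that a group carrying a symmetric finitely-supported measure whose random walk is cautious has property $H_{\mathrm{FD}}$. Since $\mu$ can be taken to be, say, the uniform measure on $S\cup S^{-1}$ for a finite generating set $S$ — which is symmetric with finite generating support — the hypothesis of that corollary is met, and we conclude $G$ has property $H_{\mathrm{FD}}$. That is the entire argument: controlled F{\o}lner pairs $\Rightarrow$ $\ell^2$-isoperimetry in balls along a subsequence $\Rightarrow$ cautious random walk $\Rightarrow$ property $H_{\mathrm{FD}}$.

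There is essentially no obstacle here, since every implication is a cited result; the only points requiring a word of care are bookkeeping ones. One should check that the multiplicative constant $C$ and the subsequence $(n_i)$ produced by Tessera's proposition are uniform enough that the rescaled radii $r_i=Cn_i$ genuinely form an admissible subsequence for Lemma \ref{pairs} — they do, since $C$ is a fixed constant and $r_i\to\infty$. One should also note that property $H_{\mathrm{FD}}$ is a property of the group $G$ and not of the particular measure $\mu$, so it suffices to exhibit one good $\mu$, as above. Thus the proof is a short concatenation, and I would simply state it as such, citing \cite{tessera}, Lemma \ref{pairs}, and \cite{erschlerozawa} in turn.
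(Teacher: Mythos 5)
Your proof is correct and matches the paper's argument exactly: the paper also chains Tessera's Proposition 4.9 to get $\lambda_\mu(B(e,Cn_i))\le Cn_i^{-2}$, feeds this into Lemma \ref{pairs} to deduce cautiousness, and then invokes \cite[Corollary 2.5]{erschlerozawa}.
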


\begin{rem}
By \cite[Theorem 1]{tessera1}, the asymptotic behavior of the $\ell^2$-isopemetric profile inside balls $\lambda_{\mu}(B(e,r))$ is invariant under quasi-isometry. 
Therefore the property of satisfying the assumption of Lemma \ref{pairs} is stable under quasi-isometry. 
\end{rem}

In Theorem \ref{prescribe} we have shown that group satisfying the claim of the theorem can be chosen to be a direct product of two groups, where each admits a subsequence of controlled F{\o}lner pairs. As a consequence of the discussion above, simple random walk on each factor is cautious.  Thus an alternative way to prove property $H_{\rm{FD}}$ of these  groups is to use \cite[Corollary 2.5]{erschlerozawa} (and the fact that the direct product of groups with Shalom's property also has this property). Corollary \ref{pairfd} is used to show Property $H_{\rm{FD}}$ for certain lacunary hyperbolic groups considered in Section \ref{lacunary} as well. 

It was shown in \cite{brieusselzheng2} that 
the construction of \cite{brieusselzheng}, 
as well as a variation of it can be used
to provide groups with Shalom's property $H_{\mathrm{FD}}$ and prescribed F{\o}lner function. 
Groups we consider in Subsection \ref{step2} and \ref{stepc} provide another simple construction of this kind. These examples show that the property of admitting a cautious simple random walk is strictly stronger than having property $H_{\rm{FD}}$.




\section{Torsion free examples}
We mention here some other extensions of nilpotent groups
where the isoperimetric inequality of Corollary \ref{thm:inequality} can be applied
to obtain good lower bound for the F{\o}lner function. 
In the construction of the groups in Subsections \ref{step2}, \ref{stepc}, we can drop the torsion relation that $b_i^2=1$
and consider cyclic extensions of torsion free nilpotent  groups. 
\begin{exa}\label{ex:torsionfree}
Let $N_{\Z,\mathfrak{c}}$ be the free nilpotent group of class $\mathfrak{c}$ on generators $b_i,i\in\Z$.
Let $G=G_{\Z,\mathfrak{c}}$ be the 
extension $N_{\Z,\mathfrak{c}}\times\Z$, where $\Z$ acts by shifting indices. Then $S=\{b_0,t\}$ is a generating set of $G$. 
The F{\o}lner function of $G$ is asymptotically equivalent to $n^{n^{\mathfrak{c}}}$. 
\begin{proof}
The proof of the lower bound is similar to the torsion case, we look for elements in the center of $N_{\Z,\mathfrak{c}}$ that can be reached within distance $n$ 
and apply Corollary \ref{thm:inequality}.

For a given tuple $i_1<i_2<\dots<i_{\mathfrak{c}}$, let $u(i_1,\dots,i_{\mathfrak{c}})=[[b_{i_1},b_{i_2}],\dots, b_{i_{\mathfrak{c}}}]$. 
Note that it is in the center of $N$ and belongs to the collection of basic commutators. We have that 
$$u(i_1,\dots,i_{\mathfrak{c}})^m=[[b_{i_1},b_{i_2}],\dots, b_{i_{\mathfrak{c}}}^m].$$
Given $n$, consider the abelian subgroup of the center of $N$ 
$$H_n=\prod_{0< i_1<\dots<i_{\mathfrak{c}}}\langle u(i_1,\dots,u_{\mathfrak{c}})\rangle.$$
For each cyclic factor in $H_n$, we have the length estimate 
$$l_S(u(i_1,\dots,u_{\mathfrak{c}})^m)=l_S([[b_{i_1},b_{i_2}],\dots, b_{i_{\mathfrak{c}}}^m])\le C(\mathfrak{c})(n+m),$$
where $C(\mathfrak{c})>0$ is a constant only depending on $\mathfrak{c}$.
Now we apply Corollary \ref{thm:inequality} with the choice of length bound $2C(\mathfrak{c})$ and $k=n$. 
The number $N(2C(\mathfrak{c})n,n)$ of indices such that that are least least $n$ distinct non-identity elements in the cyclic group 
$\langle u(i_1,\dots,u_{\mathfrak{c}})\rangle$ is $\binom{n}{\mathfrak{c}}$. 
It follows from Corollary \ref{thm:inequality} that 
$$\fol_{G,S}(2C(\mathfrak{c})n)\ge n^{Cn^{\mathfrak{c}}}.$$
The upper bound that $\fol_{G,S}(n)\le n^{C'(\mathfrak{c})n^{\mathfrak{c}}}$ follows from choosing the following test set. For an element in $G$, it can be written uniquely 
in the normal form $(f,z)$ where $z\in Z$ and $f$ is a ordered product in terms of basic commutators $f=\prod u^{f(u)}$, where $f(u)\in\Z$ is non-zero for only finitely many commutators. Recall that $w(u)$ denotes the weight of $u$, which
is equal to the commutator length, e.g. $w([i_1,i_2])=2$. 
Take the subset 
$$\Omega_{n}=\{(f,z)\in G:\ {\rm{supp}}f\subset[0,n], |f(u)|\le n^{w(u)},\ 0<z\le n\},$$
it is easy to verify that $\#\Omega_n/\Omega_n\le C/n$. And it is clear that the volume of $\Omega_n$ is bounded by
$$\#\Omega_n\le n\cdot n^{C(\mathfrak{c})n^{\mathfrak{c}}}.$$
 
\end{proof}

\end{exa}

We can then consider various quotients of $G_{\Z,\mathfrak{c}}$ similar to Subsection \ref{stepc}, namely by adding relations that certain basic commutators vanish or 
imposing finite orbits under the action of $\Z$ as in the proof of Theorem \ref{prescribe}. The resulting groups are torsion-free-nilpotent by cyclic. To bound the F{\o}lner function from below,
in the same way as illustrated in Example \ref{ex:torsionfree}, we look for elements in cyclic factors of the center of the nilpotent group within the ball of radius $n$ of identity in the ambient group and apply Corollary \ref{thm:inequality}. 

Since torsion free nilpotent groups are left-orderable,
their cyclic extensions are left-orderable as well, see e.g. \cite[subsection 2.1.1]{GOD}. By a result of Gromov \cite[Section 3.2]{gromoventropy}, for such left orderable groups, the linear algebraic F{\o}lner function (for definition see \cite{gromoventropy}) is equal to  the usual (combinatorial) F{\o}lner function.
Removing the torsion relations from the construction of groups in Theorem \ref{prescribe}, 
we obtain that for any prescribed increasing function $0<\tau(n)\le n^{\mathfrak{c}}\log n$, 
there exists a group $G=N\rtimes \Z$ where $N$ is torsion free nilpotent of step $\le \mathfrak{c}$ and $\log\fol_{G,S}(n)$
is asymptotically equivalent to $n\log n+\tau(n)$.
Moreover in the lower central series of $N$, each quotient is torsion-free abelian. 
Therefore the group $G$ is left-orderable.
It follows that the linear algebraic F{\o}lner function coincide 
with the usual combinatorial F{\o}lner function, thus satisfying the same estimate.

For the free nilpotent group $N_{{\Z}^d,\mathfrak{c}}$ of step $\mathfrak{c}$ on generators $b_x,x\in {\Z}^d$, we can consider
the extension $N_{{\Z}^d,\mathfrak{c}}\rtimes {\Z}^d$ and its various quotients. 
The method of looking for elements in cyclic factors in the center of the nilpotent group and applying
Corollary \ref{thm:inequality} provides sharp lower bounds for the F{\o}lner functions of these groups as well. 
For example, we have
$$\fol_{N_{{\Z}^d,\mathfrak{c}}\rtimes {\Z}^d}(n)\succeq n^{n^{d\mathfrak{c}}}.$$

We mention that none of the torsion free nilpotent-by-abelian groups discussed in this section have Shalom's property $H_{\rm{FD}}$, because they all admit 
$\Z\wr\Z$ as a quotient group.
The wreath product $\Z\wr\Z$ doesn't have property $H_{\rm{FD}}$ by \cite[Theorem 5.4.1.]{shalom}.

\section{Extensions of a symmetric group on a countable set}

Let $H$ be a finitely generated group equipped with a symmetric finite generating set $S$, e.g. $H=\Z^{d}$. 
Consider the group of permutations of $H$ with finite support. Let $Sym_{H}$ be the extension
of this group by $H$. It is clear that $Sym_{H}$ is a finitely generated
group, one choice of generators $\tilde{S}$ is transpositions $(e,s)$, $s\in S$ and the
generators $S$ of $H$. In this section we derive a lower bound on the F{\o}lner function on $Sym_H$
in terms of the volume growth of $H$.

Let $Sym(X)$ be the symmetric group on a countable set $X$.
For a finite subset $Y\subset X$ denote by $T(Y)$ the set of transpositions between points in $Y$,
$T(Y)=\{(y_1,y_2):\ y_1,y_2\in Y\}$. Here $(y_1,y_2)$ denotes the permutation that transposes $y_1$ and $y_2$.
The following lemma provides a lower bound for the cardinality of 
sets in $Sym(X)$ which are $C$-satisfactory with respect to $T(Y)$. By definition, $V$ is $C$-satisfactory with respect to $T(Y)$
if for each $v\in V$ and at least $C\#T(Y)=\frac{C}{2}\#Y(\#Y-1)$ multiplications
by distinct transpositions $t\in T(Y)$ remain in $V$, that is, $vt\in V$.

\begin{lem}{[}Satisfactory sets in finite symmetric groups{]} \label{lem:symmetricgroup}
For each $C>0$, there exists $D=D(C)>0$ such that the following holds.
Let $Y$ be a finite subset of $X$. Suppose that a finite subset $V$ in $Sym(X)$
is $C$-satisfactory with respect to $T(Y)$,
then the cardinality of $V$ is at least $(D\#Y)^{D\#Y}$.

\end{lem}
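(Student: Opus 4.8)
The plan is to reduce to the case $V\subseteq Sym(Y)$ and then bound $\#V$ from below by repeatedly conditioning on the value of one well-chosen coordinate; throughout we may assume $0<C<1$ (the case $C=1$ being handled by replacing $C$ with $1/2$, since a $1$-satisfactory set is $1/2$-satisfactory). First I would pass to a single coset of the subgroup $Sym(Y)\le Sym(X)$ of permutations supported on $Y$: since right multiplication by $T(Y)\subseteq Sym(Y)$ leaves each coset $g\,Sym(Y)$ invariant, the $C$-satisfactory condition on $v\in V$ only involves elements of $V$ lying in $v\,Sym(Y)$, so some nonempty slice $V\cap g\,Sym(Y)$ is again $C$-satisfactory with respect to $T(Y)$; translating it by $g^{-1}$ on the left preserves the condition and does not increase cardinality, so we are reduced to $V\subseteq Sym(Y)$, which we identify with $S_m$, $m=\#Y$. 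In this language the hypothesis says precisely that, in the Cayley graph of $S_m$ with generating set all transpositions, the subgraph induced on $V$ has minimum degree at least $C\binom{m}{2}$. It is harmless to assume $V\ne\emptyset$ and $m$ as large as we like in terms of $C$, since for $m$ below any fixed bound one shrinks $D(C)$ so that $D(C)m<1$ and the claim becomes trivial (as $x^{x}\le1$ for $0<x\le1$).

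The heart of the argument is one conditioning move. Fix any $v_0\in V$; it has at least $C\binom{m}{2}$ good transpositions, so averaging over the $m$ coordinates, some $a\in Y$ lies in at least $C(m-1)$ of them. Write $V^{(j)}=\{v\in V:v(a)=j\}$. The elements $v_0\cdot(a,b)$, over those $\ge C(m-1)$ good transpositions $(a,b)$, lie in $V$ and take at least $C(m-1)$ distinct values at the coordinate $a$ (by injectivity of $v_0$), so at least $C(m-1)$ of the fibers $V^{(j)}$ are nonempty. On the other hand, transpositions disjoint from $a$ preserve each fiber, and at most $m-1$ good transpositions of a given $v$ can involve $a$; identifying a nonempty $V^{(j)}$ with a subset of $S_{m-1}$ by restriction to $Y\setminus\{a\}$, it is therefore $C'$-satisfactory with $C'=\bigl(C\binom{m}{2}-(m-1)\bigr)/\binom{m-1}{2}=\tfrac{Cm-2}{m-2}$. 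Since $\#V=\sum_{j}\#V^{(j)}\ge C(m-1)\cdot\min_{j}\#V^{(j)}$ (minimum over nonempty fibers) and this value $C'$ bounds the satisfactory constant of every fiber uniformly, iterating the move yields $\#V\ge\prod_{i=0}^{k-1}C_i(m_i-1)$, where $m_i=m-i$, $C_0=C$, and $C_{i+1}=\tfrac{C_i m_i-2}{m_i-2}$.

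It remains to control the decay of $C_i$. A direct computation gives $1-C_{i+1}=\tfrac{m_i}{m_i-2}(1-C_i)$, which telescopes to the exact identity $1-C_i=(1-C)\tfrac{m(m-1)}{(m-i)(m-i-1)}$; hence $C_i\ge C/2$ as long as $i\le(1-\beta(C))m-1$, where $\beta(C)=\sqrt{2(1-C)/(2-C)}\in(0,1)$. Taking $k=\lfloor(1-\beta(C))m\rfloor-1$, one gets $m_i-1\ge\beta(C)m$ and hence $C_i(m_i-1)\ge\tfrac{C\beta(C)}{2}m$ for all $i\le k$, so that $\#V\ge\bigl(\tfrac{C\beta(C)}{2}m\bigr)^{(1-\beta(C))m-2}$; a routine choice of $D=D(C)>0$ — small enough both to deduce $\#V\ge(Dm)^{Dm}$ for large $m$ and to render the claim trivial for small $m$ — then completes the proof. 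The only genuine obstacle is the conditioning step: one must arrange that fixing a single coordinate splits $V$ into many — of order $Cm$ — nonempty pieces while the satisfactory constant stays bounded below over linearly many steps, and this is exactly what the value count inside a single element's star, together with the telescoping identity for $C_i$, delivers.
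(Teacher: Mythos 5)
Your proof is correct and follows essentially the same approach as the paper's: both arguments reduce to repeated conditioning on the value of a single coordinate, observing that many fibers are nonempty while each fiber loses only $O(\#Y)$ good transpositions, and iterating linearly many times. The paper tracks the absolute count of good transpositions (dropping from $m$ to $m-n$ each step) rather than the satisfactory ratio $C_i$, which makes the bookkeeping a bit lighter, but the conditioning move and the resulting bound $(D\#Y)^{D\#Y}$ are the same.
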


\begin{proof} 
Write $n=\#Y$. We prove a more general claim. Let $m\in\N$,
and we say that $V\subset Sym(X)$ satisfies the assumption $(*)$ for $m$
if for each $v\in V$, there are at least $m$ distinct elements of $T(Y)$
such that $v$ multiplied with the element (on the right) remains
in $V$. We show that if $V$ verifies assumption $(*)$ for $m$, then the cardinality of
$V$ is at least $(m/(2n))^{m/(2n)}$.
The lemma is stated for $m=Cn^{2}$.

To prove this, observe that for any given $v\in V$, there exists a $y\in Y$, 
such that  $v(y,z)\in V$ for at least $m/n$ distinct $z$'s, $z\in Y$.
We fix one of such $v_0$ and $y$ and denote by $x=v_0^{-1}(y)$.
We subdivide $V$ according to the image of $x$: consider $V_{z}$ to be elements $v$ of $V$
such that $v(x)=z$. It is clear that $V$ is a disjoint union of $V_{k}$,
where the union is taken over $k\in X$.
Note that $\#V\ge \frac{m}{n}\min\#V_{z}$,
where the minimum is take over $z$ such $v_0(y,z)\in V$.

Next we show that for each $V_{z}$ where $z$ is such that $v_0(y,z)\in V$, the satisfactory assumption
$(*)$ is verified for $m'=m-n$.
Indeed, for $u\in V_{z}$ there are at least $m$ distinct transpositions in $T(Y)$ such that $ut\in V$.
Among them, if $t=(r,s)$ satisfies $ut\in V$ but $ut\notin V_z$, 
then it implies
that either $r=y$ or $s=y$. Changing if necessary the notation,
we can assume
that $r=y$. 
It follows that among these $m$ distinct transpositions, there are at most $n$ of them such that $ut\in V$ but $ut\notin V_z$.
In other words, for each $u\in V_z$, there are at least $m-n$ distinct transpositions $t\in T(Y)$ such that $ut\in V_z$. 

Repeat this process for $m/2n$ steps, we have that the cardinality of $V$ is 
at least 
$$\#V\ge \left(m/n\right)\left((m-n)/n\right)\left((m-2n)/n\right)\dots\ge(m/(2n))^{m/(2n)}.$$
In particular, for $V$ verifying the assumption $(*)$ for $m\ge Cn^{2}$,
the cardinality of $V$ is at least $Dn^{Dn}$, for some constant
$D>0$ depending only on $C$.

\end{proof}

We now proceed to prove Corollary \ref{cor:sym}. 
Given $h_{1},h_{2}\in H$, the transposition
$(h_{1},h_{2})$ has length at most $4l_{H,S_{H}}(h_{1})+2l_{H,S_{H}}(h_{2})$.
Indeed, $(h_1,h_2)=h_1^{-1}(e,h_2h_1^{-1})h_1$. Take a transposition $(e,h)$ where $e$ is the identity element.
Write $h$ as a shortest path in the generators: $h=s_{i_1}\cdots s_{i_{\ell}}$ where $\ell=l_{H,S}(h)$.
We have $$(e,h)=(e,s_{i_1})\cdots(e,s_{i_{\ell-1}})(e,s_{i_{\ell}})((e,s_{i_1})\cdots(e,s_{i_{\ell-1}}))^{-1}.$$
Therefore, $l_{Sym_H,\tilde{S}}((e,h))\le 2l_{H,S}(h)-1$. It follows that
$$l_{Sym_H,\tilde{S}}((h_1,h_2))\le 2l_{H,S}(h_1)+2l_{H,S}(h_2h_1^{-1})\le 4l_{H,S}(h_1)+2l_{H,S}(h_2).$$

\begin{proof}[Proof of Corollary \ref{cor:sym}]
Given $n\ge1$ consider
the set $T_{n}$ of transpositions 
$$T_n=T(B_H(e,n))=\{(h,h')|\ h,h'\in H, \mbox{where }l_{H,S}(h),l_{H,S}(h')\le n\}.$$
Then by the length estimate above, we have that elements of $T_n$ are of length at most $6n$ in $Sym_H$.

Let $V$ be a F{\o}lner set of $Sym_H$ such that $\#\partial_{\tilde{S}}V/\#V\le 1/200n$. Then by Theorem \ref{thm:satisfactorysets},
this set is $1/4$-satisfactory with respect to $T_n$. Since $T_n\subset Sym(H)$, we can assume, by taking cosets, that $V$ contains a subset 
$V'\subset Sym(H)$ that is $1/4$-satisfactory with respect to $T_n$. 
Apply Lemma \ref{lem:symmetricgroup} with $Y=B_{H,S}(e,n)$,  
it follows that there exists an absolute constant $c>0$ such that 
$$\#V\ge (cv_{H,S}(n))^{cv_{H,S}(n)}.$$

\end{proof}

Note that the lower bound of $\fol_{Sym_H}$ in Corollary \ref{cor:sym} is better 
than what we would obtain by applying Corollary \ref{thm:inequality}
instead of volume lower bound for satisfactory sets with respect to transpositions. 
Indeed, for any choice of abelian subgroup of $Sym(B_{H,S}(e,n))$,
by applying Corollary \ref{thm:inequality},
we only get the lower bound of order $\exp(n)$.  

\begin{exa}

Take $H=\Z^d$, by Corollary \ref{cor:sym}, the F{\o}lner function of 
$G=Sym(\Z^d)\rtimes \Z^d$ is asymptotically greater to equal to $n^{n^d}$. 
The group $G$ admits F{\o}lner pairs adapted to the function $n^{n^d}$.
By \cite{CGP} and the general relation between F{\o}lner function and decay of return probability
(see e.g. \cite[Theorem 14.3]{woessbook}), we deduce that the return probability 
$\mu^{(2n)}(e)$ of simple random walk on $Sym(\Z^d)\rtimes \Z^d$ is asymptotically equivalent to $\exp\left(-n^{d/d+2}\log^{2/d+2}n\right)$.
We mention that random walk on $Sym(H)\rtimes H$ is called the mixer chain in 
Yadin \cite{yadin}, where the drift function of the random walk on $Sym(\Z)\rtimes \Z$ 
is estimated.

\end{exa}

\section{Lacunary hyperbolic examples}\label{lacunary}

In this section we show that there exist non-virtually cyclic amenable
groups with Shalom's property $H_{\mathrm{FD}}$ that are lacunary hyperbolic. 
Asymptotic cones first appeared in the proof of the polynomial growth theorem by Gromov in \cite{Gromov2}. Roughly speaking, an asymptotic cone of a metric space is what one sees when the space is viewed from infinitely far away. For a definition 
using ultrafilters see van den Dries and Wilkie \cite{VDW}.
A well-known result of Gromov \cite{Gromov3} states that a finitely generated group is hyperbolic if and only if all its asymptotic cones are $\mathbb{R}$-trees. 
Recall that a group is lacunary
hyperbolic if one of its asymptotic cone is an $\mathbb{R}$-tree.
By Olshanskii, Osin and Sapir \cite[Theorem 1.1]{olshosinsapir}, lacunary hyperbolic groups can be characterized
as certain direct limits of hyperbolic groups. More precisely, a finitely
generated group $G=\left\langle S\right\rangle $ is lacunary hyperbolic
if and only if $G$ is the direct limit of a sequence of hyperbolic
groups $G_{i}=\left\langle S_{i}\right\rangle $, $S_{i}$ finite,
with epimorphisms 
\[
G_{1}\xrightarrow{\alpha_{1}}G_{2}\xrightarrow{\alpha_{2}}\ldots
\]
satisfying $\alpha_{i}(S_{i})=S_{i+1}$ and $\delta_{i}/r_i\to 0$ as $i\to\infty$,
where $\delta_{i}$ is the hyperbolicity constant of $G_{i}$ relative
to generating set $S_{i}$, and $r_i$ is injectivity radius of the map $\alpha_{i}$.

Osin, Olshanskii and Sapir in \cite{olshosinsapir} constructed lacunary hyperbolic groups  that served as examples/counter examples to various question. In particular, using central extension of lacunary hyperbolic groups they show that there exists a group whose asymptotic cone with countable but non-trivial fundamental group. They prove that what is called "divergence function"  (measuring how much the distance between two points outside the ball of given radius increases after removing this ball) can be arbitrary close to linear, but not being linear. They also show that the class of lacunary hyperbolic group is quite large: for example, one can find infinite torsion groups among lacunary hyperbolic groups; some of lacunary hyperbolic groups are amenable.
Properties of endomorphisms and automorphisms of lacunary hyperbolic groups were investigated in Coulon-Guirardel \cite{CouGui}. In particular, every lacunary hyperbolic group is Hopfian, see \cite[Theorem 4.3]{CouGui}.

We now briefly describe the construction of lacunary hyperbolic elementary
amenable groups in \cite[Section 3.5]{olshosinsapir}. These groups are locally-nilpotent-by-$\mathbb{Z}$, which was considered previously in \cite[Section 8]{gromoventropy}. 
Let $p$ be a prime number
and $\mathbf{c}=\left(c_{i}\right)$ be an increasing sequence of
positive integers. Let $A=A(p,\mathbf{c})$ be the group generated
by $b_{i}$, $b_{i}^{p}=1$, $i\in\mathbb{Z}$ with the following defining relations:
\[
\left[\left[b_{i_{0}},b_{i_{1}}\right],\ldots,b_{i_{c_{n}}}\right]=1\mbox{ if }\max_{j,k}\left|i_{j}-i_{k}\right|\le n.
\]
The group $A$ is locally nilpotent, and it admits an automorphism
$a_{i}\to a_{i+1}$. Let $G=G(p,\mathbf{c})$ be the extension of
$A$ by this automorphism, $G=\left\langle A,t\right\rangle $ where
$ta_{i}t^{-1}=a_{i+1}$. By \cite[Lemma 3.24]{olshosinsapir}, if the sequence
$\mathbf{c}$ grows fast enough, then $G$ is lacunary hyperbolic. 

In what follows, we consider a variation of the construction that introduces
a sequence of slow scales where the
group $G$ is close to a lamplighter, while on another sequence of
scales $G$ can be approximated by hyperbolic groups. In particular,
simple random walk on $G$ is cautious along a subsequence of time
$(t_{i})$
with appropriate choice of parameters.

\begin{thm}\label{lacunarycautious}

There exists a finitely generated lacunary hyperbolic non-virtually
cyclic amenable group $G$ with Shalom's property $H_{\mathrm{FD}}$. Moreover, the group
$G$ can be chosen to be locally-nilpotent-by-$\mathbb{Z}$ and simple random walk
on $G$ is cautious and diffusive along an infinite subsequence of time instances.

\end{thm}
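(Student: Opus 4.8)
The plan is to combine the lacunary hyperbolic construction of Olshanskii--Osin--Sapir with an oscillation scheme in the spirit of the proof of Theorem~\ref{prescribe}, realizing the group $G$ as a direct limit $G=\varinjlim G_i$ in which the defining parameters are arranged so that two competing sequences of scales $(\rho_i)$ and $(R_i)$, with $\rho_i \ll R_i \ll \rho_{i+1}$, alternate: on scales up to $\rho_i$ the group looks like $\mathbb{Z}\wr(\mathbb{Z}/p\mathbb{Z})$, while on scales between $R_i$ and $\rho_{i+1}$ it looks like a genuine hyperbolic quotient $G_i=G(p,\mathbf{c}^{(i)})$ with hyperbolicity constant $\delta_i$ small compared to the injectivity radius of the next epimorphism. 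Concretely, I would start from the group $A=A(p,\mathbf{c})$ and the extension $G(p,\mathbf{c})=\langle A,t\rangle$ as recalled in the excerpt, but instead of letting the commutator-vanishing scales $c_n$ grow monotonically once and for all, I would interleave long plateaus on which no new commutator relations are added (these produce the lamplighter-like slow scales) with sharp increases (which force hyperbolicity at the corresponding scale, via \cite[Lemma 3.24]{olshosinsapir}). Passing to the direct limit over this interleaved sequence yields a single finitely generated group $G$ that is locally-nilpotent-by-$\mathbb{Z}$.

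First I would verify lacunary hyperbolicity: by \cite[Theorem 1.1]{olshosinsapir} it suffices to exhibit $G$ as a direct limit of hyperbolic groups $G_i=\langle S_i\rangle$ with $\alpha_i(S_i)=S_{i+1}$ and $\delta_i/r_i\to 0$, and this is exactly what the hyperbolic scales $R_i$ provide once the increase in the $c_n$-parameters between consecutive stages is chosen fast enough (the lemma of \cite{olshosinsapir} is quantitative in the growth rate of $\mathbf{c}$, so the plateaus between hyperbolic stages do not obstruct the estimate $\delta_i/r_i\to 0$, they only delay it). Next I would establish cautiousness and diffusivity of simple random walk along the slow-scale subsequence. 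Here the key observation is that on the ball of radius $c\rho_i$ the Cayley graph of $G$ coincides with that of a fixed finite-nilpotent-by-$(\mathbb{Z}/p^{k_i}\mathbb{Z})$ group sitting over $\mathbb{Z}\wr(\mathbb{Z}/p\mathbb{Z})$ --- precisely the structure exploited in the second half of the proof of Theorem~\ref{prescribe}. Thus, taking $\rho_i$ much larger than the diameter of the nilpotent kernel at that stage, the random walk up to time $(c\rho_i)^2$ is, up to a bounded additive error, the projected walk on $\mathbb{Z}\wr(\mathbb{Z}/p\mathbb{Z})$; since the lamplighter walk is cautious (and diffusive, as $L(n)\asymp\sqrt n$ on $\mathbb{Z}\wr(\mathbb{Z}/p\mathbb{Z})$), the bound \eqref{eq: cautious} and the liminf condition \eqref{diffusive} follow along $(t_i)=((c\rho_i)^2)$ by the same comparison argument as in Theorem~\ref{prescribe}. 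One could alternatively phrase this through Lemma~\ref{pairs}: the lamplighter-like slow scales give controlled F{\o}lner pairs at radii $\rho_i$, hence $\lambda_\mu(B(e,C\rho_i))\le C\rho_i^{-2}$, which yields cautiousness directly.

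Property $H_{\mathrm{FD}}$ then follows from \cite[Corollary 2.5]{erschlerozawa}, since $G$ admits a symmetric finitely supported measure whose random walk is cautious; alternatively, as in Subsection~\ref{HFD}, one can route it through Gournay's theorem \cite[Theorem 4.7]{gournay} by checking that $G$ is an FC-central extension of $\mathbb{Z}\wr(\mathbb{Z}/p\mathbb{Z})$ (the analogue of Lemma~\ref{series}: each formal commutator becomes trivial in all but finitely many of the finite nilpotent quotients, so its conjugacy class in $G$ is finite). Amenability is clear since $G$ is locally-nilpotent-by-$\mathbb{Z}$, hence elementary amenable; and $G$ is not virtually cyclic because it surjects onto $\mathbb{Z}\wr(\mathbb{Z}/p\mathbb{Z})$.

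I expect the main obstacle to be the simultaneous bookkeeping of the two scale sequences: one must choose the plateau lengths large enough that the slow scales genuinely reproduce the lamplighter geometry on a ball whose radius dwarfs the relevant nilpotent-kernel diameter (needed for cautiousness), while keeping the hyperbolicity stages frequent and sharp enough that $\delta_i/r_i\to 0$ still holds (needed for lacunary hyperbolicity). These two requirements pull the parameters in the same direction --- both want fast growth --- but the quantitative thresholds come from different lemmas (\cite[Lemma 3.24]{olshosinsapir} on one side, the diameter estimate for the finite nilpotent kernels on the other), and one has to check that a common recursively-defined sequence $\mathbf{c}$ satisfies both. Verifying that the resulting $A$ is still locally nilpotent with the intended automorphism $a_i\mapsto a_{i+1}$, and that the interleaved relations are consistent (i.e. the added commutator relations at a hyperbolic stage do not collapse the lamplighter structure already fixed at the preceding slow stage), is the technical heart; it is essentially a matter of observing that all new relations are imposed on commutators of sufficiently large "spread," hence do not touch the low-complexity part of the group seen at the previous slow scale.
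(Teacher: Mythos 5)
Your conceptual picture is right and closely matches the paper's: alternate ``slow'' scales where $G$ is well-approximated by something lamplighter-like (giving controlled F{\o}lner pairs, hence cautiousness, hence $H_{\mathrm{FD}}$ via \cite[Corollary 2.5]{erschlerozawa} or Lemma~\ref{pairs}) with ``fast'' scales where a hyperbolic approximation has large injectivity radius (giving lacunary hyperbolicity via the Ol'shanskii--Osin--Sapir direct-limit criterion). Amenability, the locally-nilpotent-by-$\mathbb{Z}$ structure, and non-virtual-cyclicity are handled the same way. However, your concrete realization differs from the paper's in a way that leaves a genuine gap.

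The paper does \emph{not} take $G=G(p,\mathbf{c})$ with a cleverly oscillating sequence $\mathbf{c}$. Instead it builds quotients of $\boldsymbol{\Gamma}=M\rtimes\mathbb{Z}$ that are forced to surject onto the lamplighter by taking $G_{i+1}=\boldsymbol{\Gamma}/(\ker\psi_i\cap\ker\pi_0)$, and it crucially imposes the periodicity relations $b_j=b_{j+2^{k_{i+1}}}$ (together with a uniform nilpotency-step bound) in $\bar{\Gamma}_{i+1}$, making the subgroup generated by all the $b_j$'s a \emph{finite} nilpotent group $H_i$. This makes $G_i$ fit into an exact sequence $1\to [H_i,H_i]\to G_i\to\mathbb{Z}\wr(\mathbb{Z}/p\mathbb{Z})\to 1$ with a finite kernel of fixed, stage-dependent diameter ${\rm Diam}_{G_i,S}([H_i,H_i])$; the slow scale $\rho_i$ is then taken to dwarf this diameter. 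That is exactly what makes the F{\o}lner pairs \emph{controlled}: $F'_r\subset B(e,Cr)$ because the ambient diameter of the whole nilpotent kernel is a constant depending only on the parameters chosen so far.

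In your version -- a pure plateau/jump modification of $\mathbf{c}$ in $A(p,\mathbf{c})$ -- this control is missing. Without periodicity, the nilpotent subgroup $\langle b_{-r},\dots,b_r\rangle$ in the relevant finite-stage approximation is a finitely generated nilpotent $p$-group on $\sim 2r$ generators whose ambient diameter grows with $r$ (and, depending on the class, can grow much faster than linearly), so the F{\o}lner pair of lamplighter-supports in $[-r,r]$ is not contained in $B(e,Cr)$. Your remark that the group ``coincides with a fixed finite-nilpotent-by-$(\mathbb{Z}/p^{k_i}\mathbb{Z})$ group sitting over $\mathbb{Z}\wr(\mathbb{Z}/p\mathbb{Z})$'' is in fact the property you need, but it does not follow from plateaus in $\mathbf{c}$ alone -- it is precisely what the periodicity relations (and the explicit $\ker\pi_0$ intersection) are inserted to guarantee. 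The related claim that the two requirements ``pull the parameters in the same direction'' glosses over this: the cautiousness side needs the nilpotent kernel to be literally finite with \emph{bounded} ambient diameter before the next hyperbolic stage, which a plateau in $\mathbf{c}$ does not by itself provide. Once you add the periodicity/step-truncation relations and build $G$ as the smallest common quotient of the $\bar\Gamma_i$'s surjecting onto the lamplighter, your argument becomes the paper's.
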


\begin{proof}

Let $M=\ast_{j\in\mathbb{Z}}\left\langle b_{j}\right\rangle $, $b_{j}^{p}=1$
be the free product of copies of $\mathbb{Z}/p\mathbb{Z}$ indexed by $\mathbb{Z}$,
$\boldsymbol{\Gamma}=M\rtimes\mathbb{Z}$ be its cyclic extension
where $\mathbb{Z}$ acts by shifting indices. Let $\pi_{0}$ be the quotient map
from $\boldsymbol{\Gamma}\to \Z\wr(\mathbb{Z}/p\mathbb{Z})$.
Write $G_0=\Z\wr(\Z/p\Z)$.
We define a sequence
of quotients of $\Gamma$ recursively as follows (these quotients are
determined by a triple of parameters $(\ell_{i},c_{i},k_{i})_{i\in\mathbb{N}}$). 

After we have defined $G_i$, 
given parameter $\ell_{i+1}$, take the nilpotent subgroup in $G_i$ generated by $b_0,\ldots, b_{2^{\ell_{i+1}}-1}$,
$$N_{i+1}=\left\langle b_{0},\ldots,b_{2^{\ell_{i+1}}-1}\right\rangle.$$
Consider the quotient group $\hat{M}_{i+1}$ of $M$
defined by imposing the relations that for any $j$, the subgroup
generated by $\left\langle b_{j},\ldots,b_{j+2^{\ell_{i+1}}-1}\right\rangle $
is isomorphic to $N_{i+1}$ (isomorphism given by shifting indices
by $j$). Let $\hat{\Gamma}_{i+1}=\hat{M}_{i+1}\rtimes\mathbb{Z}$
be the cyclic extension of $\hat{M}_{i+1}$. Then $\hat{\Gamma}_{i+1}$
splits as an HNN-extension of the finite nilpotent group $N_{i+1}$, 
therefore $\hat{\Gamma}_{i+1}$ is virtually free, see \cite[Proposition 11]{serre}.
Therefore $\hat{\Gamma}_{i+1}$
is hyperbolic, let $\delta_{i+1}$ be the hyperbolicity constant of
$\hat{\Gamma}_{i+1}$ with respect to the generating set $S=\{b_{0},t\}$.
For parameters $c_{i+1},k_{i+1}\in\mathbb{N}$, consider the quotient
group $\bar{\Gamma}_{i+1}=\bar{\Gamma}_{i+1}(c_{i+1},k_{i+1})$ of $\hat{\Gamma}_{i+1}$
subject to additional relations $(\ast)$
\begin{align*}
\left[\left[\left[b_{j_{1}},b_{j_{2}}\right],b_{j_{3}}\right],\ldots,b_{j_{m}}\right] & =0\mbox{ for any }m\ge c_{i+1},\\
b_{j} & =b_{j+2^{k_{i+1}}}\mbox{ for all }j\in\mathbb{Z}.
\end{align*}
By choosing $c_{i+1}$ and $k_{i+1}$ to be sufficiently large, we
can guarantee that the injectivity radius $r_{i}$ of the quotient
map $\hat{\Gamma}_{i+1}\to\bar{\Gamma}_{i+1}$ satisfies $r_{i+1}\gg\delta_{i+1}$. 

Let $\psi_{i+1}:\boldsymbol{\Gamma}\to\bar{\Gamma}_{i+1}$ be the
projection from $\boldsymbol{\Gamma}$ to $\bar{\Gamma}_{i+1}$. Recall that $\pi_{0}$ denotes the projection 
from $\boldsymbol{\Gamma}\to Z\wr(\mathbb{Z}/p\mathbb{Z})$. We take
the group $G_{i+1}$ to be
\[
G_{i+1}=\boldsymbol{\Gamma}/\left(\ker\psi_{i}\cap\ker\pi_{0}\right).
\]

By construction, if we choose $k_{i+1},c_{i+1}\gg\ell_{i+1}$, $\ell_{i+1}\gg\max\left\{ \ell_{i},k_{i},c_{i}\right\} $
to be large enough parameters, we have that $G_{i+1}$ and $G_{i}$
coincide on the ball of radius $2^{\ell_{i+1}}$ around the identity
element, and $G_{i+1}$ coincide with a hyperbolic group $\hat{\Gamma}_{i+1}$
on the ball of radius $r_{i+1}$ such that $r_{i+1}\gg\delta_{i+1}$,
where $\delta_{i+1}$ is the hyperbolicity constant of $\hat{\Gamma}_{i+1}$.
Let $G$ be the limit of $(G_{i})$ in the Cayley topology, or equivalently,
\[
G=\boldsymbol{\Gamma}/\cap_{i\in\mathbb{N}}\ker\psi_{i}.
\]
Since along the sequence $(r_{i})$, the ball of radius $r_{i}$ around
identity in $G$ coincide with a ball of same radius in a hyperbolic
group with hyperbolicity constant $\delta_{i}\ll r_{i}$, it follows
that $G$ is lacunary hyperbolic. \\

We now show that for parameters $\ell_{i+1}\gg\max\left\{ \ell_{i},k_{i},c_{i}\right\} $
large enough, $G$ admits a subsequence of controlled F{\o}lner pairs, and simple random walk on $G$ is cautious and diffusive
along a subsequence. The argument is along the same line as in the proof of Theorem \ref{prescribe}.

Since the balls of radius $2^{\ell_{i+1}}$ are the same in $G$ and $G_i$, up to radius $2^{\ell_{i+1}}$ it is the same to consider the corresponding 
random walk in $G_i$. Let $H_i$ be the subgroup of $G_i$ generated by $b_0,\ldots, b_{2^{k_i}-1}$. Note that $H_i$ is a finite nilpotent group.
Then because of the relations $(\ast)$ in $\bar{\Gamma}_i$, 
$G_i$ fits into the exact sequence
$$1\to [H_i,H_i]\to G_i\to \Z\wr(\Z/p\Z)\to 1.$$
We use the same letter $\pi_0$ to denote the projection $G_i\to\Z\wr(\Z/p\Z)$ as well 
($\pi_0$ was used for the projection $\boldsymbol{\Gamma}\to \Z\wr(\Z/p\Z)$).
For $r\le 2^{\ell_{i+1}-C}$, let 
$$F'_r=\{g\in G_i:\ \pi_0(g)=(f,z), {\rm{supp}}f\subset [-r,r],\ |z|\le r\},$$
$$F_r=\{g\in G_i:\ \pi_0(g)=(f,z), {\rm{supp}}f\subset [-r,r],\ |z|\le r/2\}.$$
Then by definition, $(F_r,F'_r)$ forms a F{\o}lner pair. 
The outer set $F'_r$ has diameter bounded by
$${\rm{Diam}}_{G_i,S}\le {\rm{Diam}}_{G_i,S}([H_i,H_i])+10r.$$
Since the diameter of the finite subgroup $[H_i,H_i]$ in $G_i$ depends only on the choice of parameters up to index $i$, 
we have that for $r\ge r_0(c_i,k_i)\ge{\rm{Diam}}_{G_i,S}([H_i,H_i])$, the set $F'_r$ 
is contained in the ball $B(e,20r)$ in $G_i$. Thus for such $r$ sufficiently large, $(F_r,F'_r)$ is a controlled F{\o}lner pair in $G_i$. 
Since the balls of radius $2^{\ell_{i+1}}$ are the same in $G$ and $G_i$, for $\ell_{i+1}\gg c_i,k_i$ sufficiently large such that 
$r_0(c_i,k_i)\preceq 2^{\ell_{i+1}-C}$,
$(F_r,F'_r)$ can be identified as a controlled F{\o}lner pair in $G$ as well. 
Therefore if the sequence $(\ell_{i+1})$ grows sufficiently fast compared to $(c_i,k_i)$
in the sense described above,
we have that $G$ admits a subsequence of controlled F{\o}lner pairs.  
By Lemma \ref{pairs}, simple random walk on $G$ is cautious.

Similarly, for simple random walk on $G_i$, the drift function is bounded by
$$L_{G_i,\mu}(t)\le {\rm{Diam}}_{G_i,S}([H_i,H_i])+L_{G_0,\mu}(n),$$
where $G_0=\Z\wr(\Z/p\Z)$. 
Thus when $r_0(c_i,k_i)\preceq 2^{\ell_{i+1}-C}$, there exists some constant $\ell_{i+1}'$ 
depending on $c_i,k_i$, such that for $t\in [2^{\ell'_{i+1}},2^{\ell_{i+1}}]$, the constructed group $G_i$ satisfies
$$L_{G,\mu}(t)=L_{G_i,\mu}(t)\le C\sqrt{t}.$$
That is, simple random walk on $G$ is diffusive 

\end{proof}

\begin{rem}

From the construction of the group $G$, we have that along a sequence of radius $(r_i)$, the ball of radius $r_i$ around identity of $G$ coincide with the ball of the same radius in a virtually free group. It follows that the drift function can be close to linear along a subsequence of time $(t_i)$.  

\end{rem}

\begin{rem}
The fact that the lamplighter group $\mathbb{Z}\wr(\mathbb{Z}/2\mathbb{Z})$ can be realized as a direct limit of virtually free group with growing injectivity radius was used by Osin in \cite{osinKazhdan} to show that the Kazhdan constant of a hyperbolic group is not bounded uniformly from below under changing generating sets.
\end{rem}

It is known that elementary amenable groups can have arbitrarily fast F{\o}lner functions, see \cite[Corollary 1.5]{olshosin} (also the remarks in \cite[Section3]{erschlerfoelner} and \cite[Section8]{gromoventropy}).

\begin{rem}
By taking the direct product of two lacunary hyperbolic groups as in Theorem \ref{lacunarycautious}, we obtain locally-nilpotent-by-abelian group
with property $H_{\mathrm{FD}}$ for which it can be shown (similar to
\cite[Section8]{gromoventropy}) that the F{\o}lner function is arbitrarily fast. 
In addition to groups in \cite{brieusselzheng}, they can provide another collection of elementary amenable groups with arbitrarily fast F{\o}lner functions 
while simple random walks on them have trivial Poisson boundary. Amenable (but non-elementary amenable)groups of with this property were constructed in \cite{erschlerpiecewise}.
\end{rem}

\textsc{\newline Anna Erschler --- 
D\'{e}partement de math\'{e}matiques et applications, \'{E}cole normale
sup\'{e}rieure, CNRS, PSL Research University, 45 rue d'Ulm, 75005 Paris
} --- anna.erschler@ens.fr

\textsc{\newline Tianyi Zheng --- Department of Mathematics, UC San Diego,
9500 Giman Dr. La Jolla, CA 92093
} --- tzheng2@math.ucsd.edu

\end{document}